\colorlet{darkteal}{teal!70!black}
\DeclareFontFamily{U}{mathx}{\hyphenchar\font45}
\DeclareFontShape{U}{mathx}{m}{n}{
      <5> <6> <7> <8> <9> <10>
      <10.95> <12> <14.4> <17.28> <20.74> <24.88>
      mathx10
      }{}
\DeclareSymbolFont{mathx}{U}{mathx}{m}{n}
\DeclareMathAccent{\widecheck}{0}{mathx}{"71}
\theoremstyle{plain}
\newtheorem{proposition}{Proposition}
\newtheorem{theorem}[proposition]{Theorem}
\newtheorem{corollary}[proposition]{Corollary}
\newtheorem{lemma}[proposition]{Lemma}
\theoremstyle{definition}
\newtheorem{definition}[proposition]{Definition}
\newtheorem{remark}[proposition]{Remark}
\newtheorem{question}[proposition]{Question}
\newtheorem*{convention}{Convention}
\DeclareMathOperator{\Homeo}{Homeo} 
\DeclareMathOperator{\Sym}{Sym} 
\DeclareMathOperator{\Alt}{Alt} 
\DeclareMathOperator{\Aut}{Aut} 
\DeclareMathOperator{\Fixa}{Fix} 
\DeclareMathOperator{\LO}{LO} 
\DeclareMathOperator{\id}{id} 
\newcommand{\Nat}{\mathbf{N}}
\renewcommand{\phi}{\varphi}
\newcommand{\Ti}[1][i]{\mathbf{T}_{#1}}
\newcommand{\compl}[1]{\widehat{#1}}
\newcommand{\ucompl}[1]{\widecheck{#1}} 
\newcommand{\Rcompl}[1]{\widehat{#1}} 
\newcommand{\umf}[1]{\mathrm{UMF}\left(#1\right)}
\newcommand{\Power}{\mathcal{P}}
\newcommand{\SC}{\beta}
\newcommand{\CB}{\mathrm{CB}}
\newcommand{\Graph}{\mathcal{G}}
\newcommand{\Tree}{\mathcal{T}}
\newcommand{\chain}{\epsilon}
\newcommand{\no}{n°}
\newcommand{\Raikov}{Ra\u{\i}kov}
\begin{document}

\vspace*{-2em}

\title[Groups of homeomorphisms of scattered spaces]{Dynamics and structure\\ of groups of homeomorphisms\\ of scattered spaces}
\author{Maxime Gheysens}
\address{Institut für Diskrete Mathematik und Algebra, Technische Universität Bergakademie Freiberg, 09596 Freiberg, Germany}
\email{maxime.gheysens@math.tu-freiberg.de}
\thanks{Work supported in part by the European Research Council Consolidator Grant no.~681207.}
\date{30 November 2020}
\subjclass[2010]{Primary 20F38; Secondary 54G12, 43A07}
\keywords{Scattered space, homeomorphism group, upper completion, universal minimal flows, amenability, Roelcke-precompactness, ordinal space}

\begin{abstract}
	We study the topological structure and the topological dynamics of groups of homeomorphisms of scattered spaces. For a large class of them (including the homeomorphism group of any ordinal space or of any locally compact scattered space), we establish Roelcke-precompactness and amenability, classify all closed normal subgroups and compute the universal minimal flow. As a by-product, we classify up to isomorphism the homeomorphism groups of compact ordinal spaces. 
\end{abstract}

\maketitle

\tableofcontents
\clearpage

A topological space is said \emph{scattered} if it does not contain any nonempty perfect subset. As we observed in \cite{Gheysens_omega}, this \enquote{discrete feature} allows the topology of pointwise convergence on a scattered space $X$ to be compatible with the group structure of the group of homeomorphisms $\Homeo(X)$. Moreover, the canonical map $\Homeo(X) \hookrightarrow \Sym(X)$ is then a topological embedding (when $\Sym(X)$ is endowed with the topology of pointwise convergence on the \emph{discrete} set $X$).

Except in trivial cases, the subgroup $G = \Homeo(X)$ of $\Sym(X)$ is not closed (and, in particular, is not complete for the upper uniform structure). We show here how, in most cases of interest, including all locally compact scattered spaces, we can very easily compute its closure $\overline{G}$, which happens to be a product of symmetric groups.

The point of this seemingly technical result is that many topological and dynamical properties of a topological group $G$ can be studied indifferently on $G$ or on its closure $\overline{G}$ for any topological embedding into another group $H$. (More abstractly, they can be studied on any group lying between $G$ and its upper completion $\ucompl{G}$.) This is notably the case of continuous actions on compact spaces. In particular, we will establish amenability and compute the universal minimal flow of $\Homeo(X)$ when $X$ is a locally compact scattered space (Corollary~\ref{cor:loccompscatt}). Other properties we will study that way are Roelcke-precompactness and topological perfectness (of the group) --- all notions are recalled in Section~\ref{sec:backgrounds} below.

The general scheme for all the proofs of this paper is the following three-step argument:
\begin{enumerate}
	\item When a scattered space $X$ is zero-dimensional (a property that holds automatically in the locally compact case), the canonical action of the homeomorphism group $\Homeo(X)$ on $X$ enjoys a strong \emph{transitivity} property.
	\item This transitivity property allows to describe explicitly and easily the upper completion of $\Homeo(X)$ (or, equivalently, its closure in $\Sym(X)$) as a product of symmetric groups.
	\item The properties we study (amenability, Roelcke-precompactness, and so on) are inherited by (and from) dense subgroups and are easy to establish for a product of symmetric groups.
\end{enumerate}

Instead of writing each proof along those lines, we organize the whole paper according to that scheme. We start by the latter step and collect in Section~\ref{sec:densecompl} all the needed facts about dense subgroups and product of groups. In Section~\ref{sec:fullytransitive}, we define \emph{fully transitive} groups of homeomorphisms and study for them amenability, universal minimal flows, Roelcke-precompactness, and so on. Lastly, in Section~\ref{sec:examples}, we prove that the homeomorphism group of a zero-dimensional scattered space is fully transitive. In order to give more context to this result, we also give in Section~\ref{sec:exotic} some examples of homeomorphism groups of scattered spaces that fail to be amenable or Roelcke-precompact, thus showing that we cannot get rid of the \enquote{zero-dimensional} assumption. Finally, as another application of full transitivity, we conclude the paper by classifying up to isomorphism the homeomorphism group of compact ordinal spaces (Section~\ref{sec:classif}).

\begin{remark}
	This paper is a companion to \cite{Gheysens_omega}, but the two papers differ in scope and style and can be read independently. Whereas \cite{Gheysens_omega} focuses on a particular group of interest, $\Homeo(\omega_1)$, and leverages the peculiar structure of the space $\omega_1$ to give direct ad hoc proofs, the present paper aims at a more complete picture of the class of spaces the homeomorphism group of which enjoys similar topological and dynamical properties. Inevitably lost in this generality are all the \enquote{uncountable features} of $\Homeo(\omega_1)$ (Sections~2.1 and 2.2 of \cite{Gheysens_omega}).
\end{remark}

\begin{convention}
	In this paper, the groups $\Sym(X)$ are always endowed with the topology of pointwise convergence on the set $X$ endowed with the discrete topology. If $X$ is a scattered space, the group $\Homeo(X)$ is endowed with the topology of pointwise convergence on $X$ (for which it is a topological group), otherwise, it is simply considered as an abstract group.
\end{convention}

\section{Backgrounds}\label{sec:backgrounds}

\subsection{On scattered spaces}

A topological space $X$ is said \emph{scattered} if any nonempty subset $A$ of $X$ contains an isolated point (for its subspace topology). In other words, $X$ does not contain any nonempty perfect subspace. Notable (non-discrete) examples include ordinal spaces, that is, ordinals endowed with their order topology (on which we will focus in Section~\ref{sec:classif}). When $X$ is scattered, the topology of pointwise convergence on $X$ is compatible with the group structure of $\Homeo(X)$ \cite[Cor.~2]{Gheysens_omega}.

A \enquote{constructive} approach to scattered spaces is given by the Cantor--Bendixson derivative process. For a topological space $X$, we define its \emph{derived subspace}, $X'$, as the subspace of all limit points of $X$. By transfinite induction, we can now define a nested sequence of subspaces of $X$ by $X^{(0)} = X$ ; $X^{(\alpha + 1)} = \left(X^{(\alpha)}\right)'$ and $X^{(\lambda)} = \bigcap_{\alpha < \lambda} X^{(\alpha)}$ (for $\lambda$ a limit ordinal). This sequence stabilises at some ordinal $\CB(X)$, the \emph{Cantor--Bendixson rank of $X$}. In particular, we can define for any point $x$ its \emph{Cantor--Bendixson rank} as the greatest ordinal $\alpha \leq \CB(X)$ such that $x \in X^{(\alpha)}$. The subspace $X^{(\CB(X))}$ is the largest closed perfect subspace of $X$ (sometimes called the \emph{perfect kernel} of $X$); by definition, a space $X$ is scattered if and only if $X^{(\CB(X))} = \emptyset$.

Before recalling further topological notions, let us stress that our topological spaces \emph{are not by default assumed to be Hausdorff}\footnote{On the other hand, topological \emph{groups} in this paper happen to be Hausdorff.}. The reason for this liberality is that, when $X$ is a scattered space, the topological group $\Homeo(X)$ is \emph{always} Hausdorff even if $X$ fails to be so. This can be seen in two ways.
\begin{enumerate}
	\item If $X$ is scattered, then its subset of isolated points is dense. Therefore, the intersection of all identity neighbourhoods, which is contained in the set of all $h$ that fix each isolated point, contains only the identity. Thus $\Homeo(X)$ is $\Ti[1]$, hence Hausdorff since it is a topological group.
	\item More generally, for any topological space $X$ and set $Y$, the pointwise convergence topology (i.e.~the product topology) on $X^Y$ is $\Ti$ when $X$ is so ($i = 0, 1, 2$). But a scattered space is always at least $\Ti[0]$ (see below). Hence $\Homeo(X)$ (a subset of $X^X$) is also $\Ti[0]$ when $X$ is scattered, hence is Hausdorff since it is a topological group.
\end{enumerate}

Since we do consider (mostly in Section~\ref{sec:exotic}) non-Hausdorff spaces, we recall for the convenience of the reader a few facts and terms useful in that generality.
\begin{enumerate}
	\item A topological space $X$ is said $\Ti[0]$ if distinct points do not belong to the same open sets: for any $x \neq y$, there is an open set containing $x$ but not $y$ \emph{or} containing $y$ but not $x$. It is said $\Ti[1]$ if distinct points can be separated by distinct open sets: for any $x \neq y$, there is an open set containing $x$ but not $y$. (Equivalently, if singletons are closed.) It is said $\Ti[2]$ or \emph{Hausdorff} if distinct points can be separated by disjoint open sets: for any $x \neq y$, there exist disjoint open sets $U$ and $V$ such that $x \in U$ and $y \in V$.
	\item A scattered space is always $\Ti[0]$. Indeed, for any two distinct points $x, y \in X$, at least one of them, say $x$, is an isolated point of the subspace $\{x, y\}$. This means that there is an open set in $X$ containing $x$ but not $y$.
	\item A topological space is said \emph{totally disconnected}\footnote{\enquote{Hereditarily disconnected} in the terminology of \cite{Engelking_GT}.} if its only nonempty connected subspaces are singletons.
	\item A scattered $\Ti[1]$ space is always totally disconnected. Indeed, let $C$ be a nonempty connected subspace of $X$. Since $X$ is scattered, there must exist an isolated point $x \in C$. This means that $\{x\}$ is open in $C$. But since $X$ is $\Ti[1]$, singletons are always closed. Therefore, by connectedness, $C = \{x\}$.
	\item A topological space is said \emph{zero-dimensional} if its topology admits a basis of clopen\footnote{A set is \emph{clopen} if it is both closed and open.} sets. A $\Ti[0]$ zero-dimensional space is automatically Hausdorff: indeed, if $U$ is an open set containing $x$ but not $y$, then there exists a clopen set $V$ in $U$ containing $x$, hence $V$ and its complement are disjoint neighbourhoods of $x$ and $y$, respectively\footnote{Because of this fact and of the weakness of the $\Ti[0]$ axiom, some authors already include a separation axiom in the definition of zero-dimensional, cf.~\cite[\textsc{ix}, \S~6, \no~4, déf.~6]{Bourbaki_TG_V} or \cite[6.2]{Engelking_GT}.}. Consequently, a zero-dimensional scattered space is necessarily Hausdorff.
    \item We fill follow Bourbaki's terminology by defining \emph{compact} to mean \enquote{Hausdorff and quasi-compact} and \emph{locally compact} to mean \enquote{Hausdorff and locally quasi-compact}, where a space is said \emph{locally quasi-compact} if each point admits a quasi-compact neighbourhood.
    \item Of cardinal importance for our applications is the following theorem of van Dantzig: a locally compact totally disconnected space is always zero-dimensional (cf.~e.g.~\cite[\textsc{ii}, \S~4, \no~4, cor.]{Bourbaki_TG_I} or \cite[6.2.9]{Engelking_GT}).
\end{enumerate}

For the sake of completeness, we give here references to the counterexamples that are expected from the above implications.

\begin{enumerate}
	\item The Hausdorff assumption is essential in van Dantzig's theorem: a $\Ti[1]$ locally quasi-compact and totally disconnected space need not be zero-dimensional, cf.~Example~99 in \cite{SS_1978} (observe that this example is also scattered).
	\item A scattered $\Ti[0]$ space need not be $\Ti[1]$ (cf.~Examples~8--12 in \cite{SS_1978}) and a scattered $\Ti[1]$ space need not be $\Ti[2]$ (cf.~again Example~99 in \cite{SS_1978}).
\end{enumerate}

\subsection{On uniform structures on groups}

Let $G$ be a topological group. We recall that there are four natural uniform structures definable on a topological group $G$, the \emph{left, right, upper} and \emph{lower} (or \emph{Roelcke}) uniform structures, see e.g.~\cite[chap.~2]{RD_1981}. They are defined respectively by the following basis of entourages:
\begin{description}
	\item[left] $\left\{(x, y)\ \middle|\ x \in yU\right\}$,
	\item[right] $\left\{(x, y)\ \middle|\ x \in Uy\right\}$,
	\item[upper] $\left\{(x, y)\ \middle|\ x \in Uy \cap yU\right\}$,
	\item[lower] $\left\{(x, y)\ \middle|\ x \in UyU\right\}$,
\end{description}
where $U$ ranges among a basis of identity neighbourhoods of $G$.

Only the last two structures will play a crucial rôle in this paper, via two complementary notions, precompactness and completion.

A group $G$ is said \emph{Roelcke-precompact} if it is precompact for the lower uniform structure. In other words, for any identity neighbourhood $U$, there is a finite set $F$ such that $G = UFU$. This precompactness condition ensures some rigidity of the actions of the group (for instance, any isometric continuous actions on a metric space has bounded orbits, see e.g.~\cite[Prop.~2.3]{Tsankov_2012}) while simultaneously being quite common among (non-locally compact) groups. (By contrast, if the group is Hausdorff, the precompactness condition for any of the other three uniform structures boils down to being a dense subgroup of a compact group \cite[9.12 and 10.12(c)]{RD_1981}.) Note that this notion is useless for locally compact groups, which are automatically complete for the Roelcke uniform structure (hence, for them, Roelcke-precompactness is equivalent to compactness) \cite[8.8]{RD_1981}.

Another notion worth of study for these uniform structures is their \emph{completion}.  A distinguishing feature of the upper uniform structure is that its (Hausdorff) completion $\ucompl{G}$ is again a topological \emph{group}. (By contrast, the completion of the right or left uniform structures on a topological group is only a topological semigroup and the completion of the lower uniform structure can even fail to be a semitopological semigroup\footnote{But see \cite[\S~7]{Zielinski_2019} for positive results in the case of locally Roelcke-precompact Polish groups.}). In particular, any topological group is a dense subgroup of a group which is complete for the upper uniform structure. For these facts, see e.g.~\cite[10.12 and 10.24]{RD_1981}. A group is called \emph{\Raikov-complete} if it is complete for its upper uniform structure.

Observe that, if $H$ is a subgroup of a topological group $G$, endowed with its induced topology, then the restriction to $H$ of the upper uniform structure of $G$ is nothing but the upper uniform structure of $H$. (This would also hold for the right and the left uniform structures, but not in general for the lower one, see e.g.~\cite[3.24 and 3.25]{RD_1981}.) In particular, we see that, in order to compute the upper completion of a group, it is sufficient to embed it in a \Raikov-complete group and to compute its closure there.

\begin{remark}
	The right uniform structure has a cameo rôle in this paper, via its link to amenability and extreme amenability, as explained in the next subsection.
\end{remark}

\subsection{On amenability and universal minimal flows}

A topological group is said \emph{amenable} if, whenever it acts continuously and affinely on a nonempty compact convex set, it has a fixed point. It is said \emph{extremely amenable} if, whenever it acts continuously on a nonempty compact set, it has a fixed point. Here, the continuity of the action of a group $G$ on a space $K$ means the (joint) continuity of the action map $G \times K \rightarrow K$. This is equivalent to requiring the morphism $\phi\colon G \rightarrow \Homeo(K)$ to be continuous when the latter is endowed with the topology of \emph{uniform} convergence (or compact-open topology).

How much a group fails to be extremely amenable is assessed by its \emph{universal minimal flow}, defined as follows. A \emph{flow} is a nonempty compact space $K$ endowed with a continuous action of a topological group $G$. It is said \emph{minimal} if any point in it has a dense orbit; by a standard Zorn argument, any flow contains a minimal subflow. It is quite easy to show that for any group $G$, there is a \enquote{maximal} minimal flow $K$, i.e.~such that there is a $G$-equivariant map from $K$ onto any other minimal flow. Trickier but still true is the fact that this minimal flow is unique up to isomorphism, that is, is universal. It is called the \emph{universal minimal flow} of $G$, written $\umf{G}$. Standard textbooks for all these facts are e.g.~\cite{deVries_1993, ellis_1969}. Observe that a group is extremely amenable if and only if its universal minimal flow is a singleton.

(Adding convexity everywhere, these notions can be adapted to amenability in order to yield the \emph{affine univeral minimal flow} of a topological group. We will not need it in this paper since we will prove that our groups are amenable, that is, their affine universal minimal flow is a singleton.)

If $G$ is a compact group, then $\umf{G} = G$, but if $G$ is a locally compact non-compact group, then $\umf{G}$ is not metrisable \cite[App.~2]{KPT_2005}. Describing explicitly the universal minimal flow of a group is often an ambitious endeavour but here is a particular case where it is tractable. Assume that $G$ is Hausdorff and contains a closed subgroup $H$ which is \emph{extremely amenable} and \emph{co-precompact}. The latter condition means that the set of right classes $G/H$, endowed with the quotient of the right uniform structure of $G$ (cf.~\cite[5.21]{RD_1981}), is precompact. Observe that, for any flow $K$, the orbit map $G \rightarrow K\colon g \mapsto gx$ is equivariant and right-uniformly continuous for any point $x$ in $K$. Chosing $x$ to be a point fixed by $H$, which must exist since $H$ is extremely amenable, we see that this map factors through $G/H$, hence extends to the completion $\compl{G/H}$. Therefore, the space $\compl{G/H}$ admits an equivariant map to any flow of $G$. This means that any \emph{minimal} subflow of $\compl{G/H}$ is isomorphic to the universal minimal flow of $G$ (see \cite[6.2]{Pestov_2006} and in particular its Theorem~6.2.9).

In many cases, there exists an extremely amenable co-precompact subgroup $H$ such that the flow $\compl{G/H}$ is already minimal. In this situation, the universal minimal flow of a product of groups can be easily computed:

\begin{proposition}\label{prop:umfproduct}
	Let $\{G_i\}$ be a family of Hausdorff topological groups. Assume that each $G_i$ contains a closed extremely amenable co-precompact subgroup $H_i$ such that $\umf{G_i} = \compl{G_i/H_i}$. Then the universal minimal flows commute with the product:
\begin{equation*}
    \umf{\prod_i G_i} = \prod_i \umf{G_i}.
\end{equation*}
\end{proposition}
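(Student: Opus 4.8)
The plan is to set $G = \prod_i G_i$ and $H = \prod_i H_i$, to check that $H$ is a closed, extremely amenable, co-precompact subgroup of $G$, and then to apply the criterion recalled immediately before the statement: every minimal subflow of $\compl{G/H}$ is isomorphic to $\umf{G}$. The actual content is then to identify $\compl{G/H}$ with $\prod_i \umf{G_i}$ as a $G$-flow and to verify that this flow is already minimal.

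First, extreme amenability of $H$. For a \emph{finite} subproduct $H_{i_1} \times \cdots \times H_{i_n}$ this is classical and proved by induction: if the group acts continuously on a nonempty compact space $K$, the set of points fixed by the first factor is nonempty (extreme amenability of $H_{i_1}$) and closed, hence compact (the action is continuous and $K$ is Hausdorff), and it is invariant under the remaining factors since they commute with the first; one concludes by the inductive hypothesis. For an arbitrary product, let $H$ act continuously on a nonempty compact $K$ and, for each finite index set $J$, let $K_J$ be the (closed, and by the finite case nonempty) set of points fixed by the subproduct $\prod_{i \in J} H_i \leq H$; since $K_J \cap K_{J'} \supseteq K_{J \cup J'}$, compactness gives $\bigcap_J K_J \neq \emptyset$. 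A point in this intersection is fixed by the dense subgroup $\bigoplus_i H_i$, hence — the fixed-point set of each single element being closed, and the action being jointly continuous — by all of $H$. Closedness of $H$ in $G$ is clear.

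Second, co-precompactness and identification of the completion. The right uniform structure of a product of topological groups is the product of the right uniform structures (identity neighbourhoods may be taken of the form $\prod_{i \in J} V_i \times \prod_{i \notin J} G_i$ with $J$ finite), and a short computation with basic entourages then shows that the canonical bijection $G/H \to \prod_i (G_i/H_i)$ is an isomorphism of uniform spaces when $G/H$ carries the quotient of the right uniform structure of $G$ and the target carries the product of the quotient uniform structures of the $G_i/H_i$. Since a product of precompact uniform spaces is precompact and each $G_i/H_i$ is precompact, $G/H$ is precompact, i.e.\ $H$ is co-precompact. As completion commutes with products of uniform spaces, and the left-translation action of $G = \prod_i G_i$ on $G/H$ corresponds to the coordinatewise action, we get $\compl{G/H} \cong \prod_i \compl{G_i/H_i} = \prod_i \umf{G_i}$ as $G$-flows — a product of compact spaces on which $\prod_i G_i$ acts continuously coordinatewise.

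Third, minimality and conclusion. Each $\umf{G_i} = \compl{G_i/H_i}$ is a minimal $G_i$-flow, so for a point $x = (x_i) \in \prod_i \umf{G_i}$ the closure of its $G$-orbit $\prod_i (G_i \cdot x_i)$ is $\prod_i \overline{G_i \cdot x_i} = \prod_i \umf{G_i}$, using that the closure of a product of subsets is the product of their closures and that each $\overline{G_i \cdot x_i} = \umf{G_i}$ by minimality; hence $\prod_i \umf{G_i}$ is a minimal $G$-flow. Being (all of) $\compl{G/H}$, it is therefore isomorphic to $\umf{G}$ by the criterion recalled above, which is the desired equality $\umf{\prod_i G_i} = \prod_i \umf{G_i}$. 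The step demanding the most care is the uniform-space bookkeeping of the second paragraph — matching the quotient right uniformity on $G/H$ with the product of those on the $G_i/H_i$, and checking that passing to completions is compatible with the product and with the group action; everything else rests on two soft facts, namely the compactness argument producing a common fixed point of a directed union of extremely amenable subgroups, and the identity ``closure of a product is the product of the closures''.
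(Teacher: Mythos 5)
Your proposal is correct and follows essentially the same route as the paper: form $H=\prod_i H_i$, use that the right uniformity and the completion commute with products to identify $\compl{G/H}$ with $\prod_i\umf{G_i}$, check minimality of this product flow, verify extreme amenability of $H$, and invoke the criterion recalled before the statement. The only difference is that you prove the preservation of extreme amenability under products and the minimality of the product flow in full detail, where the paper cites or asserts them; both of your arguments are sound.
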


\begin{proof}
	Let us write $H = \prod_i H_i$ and $G = \prod_i G_i$. Since the product of right uniform structures is the right uniform structure of the product (cf.~\cite[3.35]{RD_1981}), the subgroup $H$ is co-precompact in $G$. Moreover, completion also commutes with the product \cite[\textsc{ii}, \S~3, \no~9]{Bourbaki_TG_I}. Hence the compact $G$-space $\compl{G / H}$ is isomorphic to $\prod_i \umf{G_i}$ and therefore minimal.

	Now extreme amenability is preserved by products (see for instance \cite[Lemma~15]{Gheysens_omega} for the analogous statement for amenability --- the proof does not appeal in an essential way to convexity, hence also holds for extreme amenability), so $H$ is also extremely amenable. By the above discussion, this shows that $\compl{G/H}$ admits equivariant maps to any flow of $G$. Hence $\compl{G/H} = \umf{G}$.
\end{proof}

\begin{remark}
	By deep results of Ben Yaacov, Melleray, Nguyen Van The et Tsankov \cite{MNVTT_2016, BYMT_2017}, the hypothesis \enquote{there exists an extremely amenable co-precompact subgroup $H_i$ such that $\umf{G_i} = \compl{G_i / H_i}$} holds in particular whenever $G_i$ is a Polish group with a \emph{metrisable} universal minimal flow. It also holds for the symmetric groups $\Sym(X)$ (see the next subsection).
\end{remark}

\begin{remark}
	Moreover, we point out that Basso and Zucker have recently provided in \cite{BZ_2020} a refined \emph{characterization}  of the commutation of the universal minimal flow with the product operator, via the topology of flows. See in particular Th~6.1, Prop.~7.7 and Cor.~7.8 of \cite{BZ_2020}. The hypothesis of our Proposition~\ref{prop:umfproduct} implies in particular that the groups $G_i$ fit into their general framework (Prop.~8.4 therein).
\end{remark}

\subsection{On symmetric groups}\label{sec:sym}

We collect here a few facts about the symmetric group $\Sym(X)$ of all the bijections of a set $X$ (of any cardinality). We recall that we endow this group with the topology of pointwise convergence on the discrete set $X$.

\begin{proposition}\label{prop:symamenrprec}
	The group $\Sym(X)$ is amenable, Roelcke-precompact and \Raikov-complete.
\end{proposition}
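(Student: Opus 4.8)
There are three assertions; I would treat each separately. The degenerate case where $X$ is finite is immediate (the group is then finite and discrete), so assume $X$ infinite. Throughout, write $U_F \leq \Sym(X)$ for the pointwise stabiliser of a finite subset $F \subseteq X$; as $F$ ranges over the finite subsets, these form a neighbourhood basis of the identity.

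\textbf{Amenability.} The plan is to exhibit a dense amenable subgroup, namely the group $\Sym_{\mathrm{fin}}(X)$ of finitely supported permutations. It is dense: given $g$ and a finite $F$, the injection $g|_F$ extends, by a bijective count inside the finite set $F \cup g(F)$, to a permutation of $F \cup g(F)$, which, padded by the identity off $F \cup g(F)$, is a finitely supported permutation agreeing with $g$ on $F$. It is locally finite --- any finitely many of its elements have supports inside a common finite $S$, hence generate a subgroup of $\Sym(S)$ --- and every locally finite group is amenable, a fortiori as a topological group for the subspace topology. Finally, amenability passes from a dense subgroup: for a continuous affine action on a nonempty compact convex $K$, the subgroup of $\Sym(X)$ fixing a point $p$ fixed by $\Sym_{\mathrm{fin}}(X)$ is closed and contains the dense subgroup $\Sym_{\mathrm{fin}}(X)$, hence is all of $\Sym(X)$. (This last step foreshadows the general machinery of Section~\ref{sec:densecompl}.)

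\textbf{Roelcke-precompactness.} It suffices to show that $U_F \backslash \Sym(X) / U_F$ is finite for each finite $F$. I would check that the double coset of $g$ is determined by the partial injection $g|_A$, where $A := F \cap g^{-1}(F) \subseteq F$: right multiplication by $U_F$ leaves $g|_F$ unchanged, while left multiplication by $U_F$ leaves $g|_A$ unchanged and freely rearranges the (finitely many) values of $g$ on $F \setminus A$ among the points outside $F$; this gives $h \in U_F g U_F$ if and only if $F \cap h^{-1}(F) = A$ and $h|_A = g|_A$, the converse inclusion using that $X \setminus F$ is infinite to realise the required rearrangement as an element of $U_F$. Since $F$ is finite there are only finitely many such partial injections, hence finitely many double cosets, which is exactly Roelcke-precompactness.

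\textbf{\Raikov-completeness.} The upper uniform structure refines both the left and the right uniform structures, so a Cauchy net $(g_\alpha)$ for the upper structure is Cauchy for each of them; unwound, this says that $g_\alpha \to f$ pointwise and $g_\alpha^{-1} \to f'$ pointwise for some maps $f, f' \colon X \to X$ (each coordinate being eventually constant along the net). Chasing eventual values shows $f \circ f' = f' \circ f = \id_X$, so $f \in \Sym(X)$ with inverse $f'$, and then $g_\alpha \to f$ in $\Sym(X)$; hence every Cauchy net converges. The one place asking for genuine care is the double-coset bookkeeping in the Roelcke-precompactness step --- identifying exactly the right invariant of a double coset --- whereas the amenability argument is soft and the completeness verification routine.
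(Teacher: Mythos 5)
Your proof is correct and takes essentially the same route as the paper: amenability is obtained exactly as there, via the dense locally finite subgroup of finitely supported permutations, while for Roelcke-precompactness and \Raikov-completeness the paper merely cites references (resp.\ remarks that the claim can be shown by hand), and your double-coset computation for $U_F\backslash\Sym(X)/U_F$ and your Cauchy-net argument are precisely the standard verifications behind those citations. All three arguments check out, including the one delicate point you flag, namely that the pair $\left(F\cap g^{-1}(F),\ g|_{F\cap g^{-1}(F)}\right)$ is a complete invariant of the double coset.
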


\begin{proof}
	For amenability, observe indeed that the subgroup $H$ of finitely supported permutations of $X$ is dense in $\Sym(X)$ and that $H$ is amenable (even as a discrete group) since it is locally finite. For Roelcke-precompactness, see for instance \cite[9.14]{RD_1981}. \Raikov-completeness can be easily shown by hand or alternativelty be deduced from more general results, such as \Raikov-completeness for the compact-open topology on $\Homeo(X)$ when $X$ is a locally compact space \cite[Th.~6]{Arens_1946} or for automorphism groups of uniformly locally bounded spaces \cite[\S~2.3]{Gheysens_ulb}.
\end{proof}

Moreover, the universal minimal flow of $\Sym(X)$ can be described explicitly as $\LO(X)$, the space of linear orders on the set $X$ (see \cite[\S~2]{GW_2002} for $X$ countable and e.g.~\cite[Th.~4]{Bartosova_2013} for any $X$). We recall that linear orders, being binary relations, can be identified with their characteristic functions on $X \times X$, so that the set of all linear orders can be viewed as a subspace of $2^{X \times X}$, from which it inherits its compact topology and an action by $\Sym(X)$. In other words, a typical neighbourhood of a linear order $<$ is made of all linear orders $<'$ such that $x_i < y_i \Leftrightarrow x_i <' y_i$ for some finite family of couples $(x_1, y_1), \dots, (x_n, y_n) \in X \times X$ and, if $g \in \Sym(X)$, the order $\mathbin{(g\!<)}$ is defined by $x \mathbin{(g\!<)} y \Leftrightarrow g^{-1} x < g^{-1} y$ for any $x, y \in X$. 

Let us note that this description of the universal minimal flow of $\Sym(X)$ fits in the particular case explained above:

\begin{proposition}\label{prop:umfsym}
	Let $X$ be any set and $G = \Sym(X)$. There exists a linear order $<$ on $X$ such that $\Aut(X, <)$ is a closed extremely amenable co-precompact subgroup of $G$ and that $\LO(X)$, the universal minimal flow of $G$, is the completion of the quotient $G/\Aut(X, <)$.
\end{proposition}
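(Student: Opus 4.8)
The plan is to choose $<$ to be a dense linear order without endpoints that is ultrahomogeneous, and then to identify the coset space $G/\Aut(X,<)$, \emph{as a uniform space}, with the $\Sym(X)$-orbit of $<$ inside $\LO(X)$ (with its subspace uniformity); since that orbit is dense in the compact space $\LO(X)$, its completion will be $\LO(X)$ itself, which is the required conclusion.

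I would first dispose of the finite case: if $X$ is finite, take $<$ arbitrary, so that $\Aut(X,<)$ is trivial (only the identity preserves a finite linear order) --- hence closed, amenable, extremely amenable and co-precompact --- while $\Sym(X)$ acts simply transitively on the finite set $\LO(X)$, giving $\LO(X) \cong G/\Aut(X,<) = \compl{G/\Aut(X,<)}$. Assume henceforth $X$ infinite and fix $<$ so that $(X,<)$ is a dense linear order without endpoints which is moreover ultrahomogeneous, i.e.\ every isomorphism between two finite sub-orders extends to an automorphism of $(X,<)$; such an order exists on any infinite set (for $X$ countable it is a copy of the rationals, and in general it is produced by a transfinite back-and-forth construction, or taken as an $\aleph_0$-saturated model of the theory of dense linear orders without endpoints of the right cardinality). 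With this choice, $\Aut(X,<)$ is exactly the stabiliser of the point $<$ for the continuous action of $\Sym(X)$ on the Hausdorff space $\LO(X) \subseteq 2^{X \times X}$, hence is closed in $\Sym(X)$; and its extreme amenability is Pestov's theorem when $X$ is countable \cite{Pestov_2006} and is due to Barto\v{s}ov\'a in general \cite{Bartosova_2013}, the underlying ingredient being the Ramsey property of the class of finite linear orders together with the ultrahomogeneity of $(X,<)$.

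Next I would analyse the orbit map. The $\Sym(X)$-orbit of $<$ in $\LO(X)$ is precisely the set of linear orders on $X$ isomorphic, as abstract linear orders, to $(X,<)$, and this orbit is dense in $\LO(X)$: a basic neighbourhood of an arbitrary order is determined by its restriction to a finite set $F \subseteq X$, and since the infinite order $(X,<)$ contains a copy of every finite linear order one realises that restriction by an order-embedding of $F$ into $(X,<)$, extends it to a bijection of $X$ (possible because $F$ is finite and $X$ infinite) and transports $<$ back along it. As $\Aut(X,<)$ is the stabiliser of $<$, the orbit map factors through an injective, equivariant, uniformly continuous map $\Phi \colon G/\Aut(X,<) \to \LO(X)$ with dense image --- the coset space carrying the quotient of the right uniform structure, and orbit maps being right-uniformly continuous as recalled earlier. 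The heart of the argument will be that $\Phi$ is a uniform \emph{embedding}: given a finite $F \subseteq X$ and $g,g' \in G$ with $g\cdot <$ and $g'\cdot <$ agreeing on $F \times F$, the bijection $g'^{-1}(a) \mapsto g^{-1}(a)$ (for $a \in F$) is an order isomorphism between the finite subsets $g'^{-1}(F)$ and $g^{-1}(F)$ of $(X,<)$, so ultrahomogeneity extends it to some $c \in \Aut(X,<)$, whereupon $g c g'^{-1}$ fixes $F$ pointwise; thus $g\Aut(X,<)$ and $g'\Aut(X,<)$ are close for the quotient right uniform structure, which gives the reverse inclusion of uniformities to the one expressing uniform continuity of $\Phi$, and hence $\Phi$ is a uniform embedding.

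Finally I would conclude: being a uniform embedding with dense image into the compact --- hence complete --- space $\LO(X)$, the map $\Phi$ exhibits $\LO(X)$ as the completion $\compl{G/\Aut(X,<)}$; in particular $G/\Aut(X,<)$ is precompact, so $\Aut(X,<)$ is co-precompact in $\Sym(X)$, and, combining this with closedness and extreme amenability and recalling that $\LO(X)$ is the universal minimal flow of $\Sym(X)$ (\cite[\S~2]{GW_2002}, \cite[Th.~4]{Bartosova_2013}), the proposition follows. The delicate point --- and the one I expect to be the main obstacle to write cleanly --- is the embedding property of $\Phi$, which genuinely uses ultrahomogeneity: if $<$ were instead a well-order, then $\Aut(X,<)$ would be trivial and yet $\Sym(X)$ is not precompact for its right uniform structure, so $\Phi$ could not be an embedding there; thus the choice of $<$ really does the work. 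The two remaining ingredients --- existence of such an order of the prescribed cardinality and extreme amenability of its automorphism group --- are standard but rest respectively on a back-and-forth argument and on finite Ramsey theory.
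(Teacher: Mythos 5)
Your proof is correct and follows essentially the same route as the paper, which also chooses $<$ to be an $\omega$-homogeneous linear order (for linear orders this coincides with your ultrahomogeneity hypothesis and automatically forces density without endpoints) and cites Pestov for the extreme amenability of $\Aut(X,<)$ and for the identification $\LO(X) = \compl{G/\Aut(X,<)}$. The only difference is that you spell out the uniform-embedding argument for the orbit map, a step the paper delegates to \cite[\S~6.3]{Pestov_2006}.
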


\begin{proof}[References for the proof] The result is straightforward when $X$ is finite. Indeed, in that case, $\Sym(X)$ is finite, hence equal to its universal minimal flow. Moreover, any finite linear order $<$ is a well-ordering, so $\Aut(X, <)$ is trivial, hence extremely amenable, and $\Sym(X)$ acts uniquely transitively on the space of linear orders, hence $\LO(X) \simeq \Sym(X)$.

	When $X$ is infinite, any linear order which is \emph{$\omega$-homogeneous} works, that is, any linear order such that $\Aut(X, <)$ acts transitively on the set $\Power_n (X)$ of subsets of $X$ of size $n$, for any natural number $n$. The extreme amenability of these groups has been established in \cite[5.4]{Pestov_1998} (see also Assertion~5.1 therein for the existence of these $\omega$-homogeneous orders). The fact that they can be used to describe the universal minimal flow of $\Sym(X)$ can be read in \cite[\S~6.3]{Pestov_2006} (see in particular 6.3.5 and 6.3.6).
\end{proof}

Propositions~\ref{prop:symamenrprec} and~\ref{prop:umfsym} hold (trivially) for finite $X$. A specific property of infinite symmetric groups is that they are \emph{topologically simple}, that is, any nontrivial normal subgroup is dense. This follows from the classification of normal subgroups of $\Sym(X)$ \cite[(4), p.~17]{Baer_1934} (see also \cite[\S~8.1]{DM_1996} for a more recent exposition). In particular, since they are not commutative, they are \emph{topologically perfect} (their derived subgroup\footnote{Not to be confused with the derived space of a topological space, the \emph{derived subgroup} of a group is the subgroup generated by all commutators.} is dense).

On the contrary, for finite $X$, the derived subgroup of $\Sym(n)$ is $\Alt(n)$ (the alternating subgroup, made of even permutations), which is nontrivial for $n \geq 3$. It is also the only proper nontrivial normal subgroup, except for $n = 4$ where a second one appears, generated by the involutions without fixed points.

\section{Dense subgroups and upper completion}\label{sec:densecompl}

Let $G$ be a topological group and $H$ a \emph{dense} subgroup of $G$. We endow the group $H$ with the induced topology of $G$. For many dynamical considerations, there is no difference between $G$ and $H$. More precisely:

\begin{proposition}\label{prop:dense}
Let $G$ be a topological group and $H$ a dense subgroup of $G$, endowed with the induced topology. Then
	\begin{enumerate}
		\item $G$ is topologically perfect if and only if $H$ is so;
		\item\label{it:amendense} $G$ is amenable if and only if $H$ is so;
		\item $G$ is Roelcke-precompact if and only if $H$ is so;
		\item The universal minimal flow of $G$ is the same as the one of $H$ (that is, $\umf{H}$ is the space $\umf{G}$ endowed with the $H$-action deduced from the $G$-action).
	\end{enumerate}
\end{proposition}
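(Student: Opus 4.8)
The plan is to treat each of the four items in turn, exploiting the fact that $H$ is dense in $G$ and carries the induced topology, so that identity neighbourhoods of $H$ are exactly the traces $U \cap H$ of identity neighbourhoods $U$ of $G$, and $\overline{H}^G = G$.

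For topological perfectness, I would argue as follows. If $H$ has dense derived subgroup, then $[H,H] \subseteq [G,G]$ is dense in $H$, hence dense in $G$ since $H$ is dense in $G$; so $[G,G]$ is dense in $G$. Conversely, if $[G,G]$ is dense in $G$, I would use that $[H,H]$ is a normal subgroup of $G$? No --- $[H,H]$ need not be normal in $G$. Instead I would note that the closure $\overline{[H,H]}$ (closure taken in $G$) contains all commutators $[h_1,h_2]$ with $h_i \in H$; by joint continuity of the commutator map $G \times G \to G$ and density of $H$, every commutator $[g_1,g_2]$ with $g_i \in G$ lies in $\overline{[H,H]}$, whence $[G,G] \subseteq \overline{[H,H]}$, and since $[G,G]$ is dense, so is $[H,H]$.

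For amenability, I would invoke that amenability of a topological group is equivalent to amenability of any dense subgroup with the induced topology --- this is standard (a continuous affine action of $H$ on a compact convex set $K$ need not extend to $G$, but one can instead use the fixed-point formulation together with the fact that $G$ acts continuously on the same $K$ only if the action was already defined on $G$; the cleaner route is via the mean-theoretic characterization, noting that the spaces of right-uniformly continuous bounded functions on $G$ and on $H$ can be identified because $H$ is dense). Since the paper cites \cite[Lemma~15]{Gheysens_omega} for the stability of amenability under products and treats these facts as known background, I would simply cite the analogous standard fact (e.g.\ \cite[\S~3]{RD_1981} or a reference on amenable topological groups) that amenability passes between a group and a dense subgroup. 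For Roelcke-precompactness, given an identity neighbourhood of $H$, write it as $U \cap H$ with $U$ an identity neighbourhood of $G$; pick an identity neighbourhood $V$ of $G$ with $\overline{V} \subseteq U$ and, using Roelcke-precompactness of $G$, a finite $F \subseteq G$ with $G = VFV$. Approximating each element of $F$ by an element of $H$ and each occurrence of $V$ by $V' \cap H$ for a suitably small symmetric $V'$ with $V'V'V' \subseteq V$ or so, one gets a finite $F' \subseteq H$ with $H \subseteq (V'H)(F')(V'H) \subseteq (U\cap H)F'(U\cap H)$ inside $H$; the converse direction is immediate because $G = \overline{H}$ and $H \subseteq (U\cap H)F'(U\cap H)$ forces $G \subseteq \overline{UF'U} \subseteq U'F'U'$ for a slightly larger $U'$. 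The main obstacle here is the usual bookkeeping with the lower uniform structure, since --- as the excerpt itself warns --- the lower uniform structure of $H$ is not in general the restriction of that of $G$; one must argue directly with the defining basis $UyU$ rather than by restriction.

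For the universal minimal flow, the cleanest argument is: any flow $K$ of $G$ restricts to a flow of $H$ (the action map $H \times K \to K$ is continuous as the restriction of $G \times K \to K$), and conversely any flow of $H$ extends uniquely to a flow of $G$. For the extension, given a continuous action $H \curvearrowright K$, the corresponding morphism $H \to \Homeo(K)$ is continuous for the uniform-convergence topology on $\Homeo(K)$; since $\Homeo(K)$ with that topology is \Raikov-complete for $K$ compact \cite[Th.~6]{Arens_1946} and $H$ is dense in $G$, the morphism extends (uniquely, by continuity and density) to a continuous morphism $G \to \Homeo(K)$, i.e.\ to a continuous $G$-action on $K$. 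Minimality is preserved in both directions because the $H$-orbits and $G$-orbits of a point have the same closure: $\overline{Gx} = \overline{\overline{H}\,x} = \overline{Hx}$ using joint continuity of the action. Finally, $G$-equivariant maps between flows are exactly the $H$-equivariant maps between the restricted flows (again by density and continuity), so the universal property that characterizes $\umf{G}$ among $G$-flows translates verbatim into the universal property characterizing $\umf{H}$ among $H$-flows; by uniqueness of the universal minimal flow, $\umf{H}$ is $\umf{G}$ with the restricted action. I expect the extension-of-actions step (invoking \Raikov-completeness of $\Homeo(K)$ to push an $H$-action up to $G$) to be the one requiring the most care, since it is where density is genuinely used rather than being a formality.
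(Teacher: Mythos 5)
Your items (1) and (4) follow the paper's own argument essentially verbatim: continuity of the commutator map gives $[G,G] \subseteq \overline{[H,H]}$ for topological perfectness, and the bijection between $H$-flows and $G$-flows is obtained by extending the morphism $H \to \Homeo(K)$ through the upper completion, using \Raikov-completeness of $\Homeo(K)$ for compact $K$. For item (2), however, your parenthetical claim that \enquote{a continuous affine action of $H$ on a compact convex set $K$ need not extend to $G$} is false and contradicts the extension argument you yourself give correctly for item (4): since $\ucompl{H} \cong \ucompl{G} \supseteq G$, the morphism $H \to \Homeo(K)$ does extend to $G$, and this is exactly how the paper deduces amenability of $H$ from that of $G$. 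Your fallback via invariant means on right-uniformly continuous functions is a legitimate alternative (the right uniformity of $H$ is the restriction of that of $G$ and $H$ is dense, so the function algebras agree), but the surrounding confusion should be removed.

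For item (3) you work harder than necessary and leave a real gap. The paper's route is simply that for a \emph{dense} subgroup the lower uniform structure of $H$ \emph{is} the restriction of that of $G$ (\cite[3.24]{RD_1981}); the caveat you quote concerns arbitrary subgroups, not dense ones. In your direct argument, the problematic step is passing from a covering of $H$ by sets $V f V$ with $f \in F \subseteq G$ to a covering by sets $(U\cap H)\, h_f\, (U \cap H)$ with the outer factors in $H$: if $x = u_1 h_f u_2$ with $u_1, u_2 \in U$ and you replace $u_2$ by a nearby $u_2' \in H$, then $u_1' = x\, u_2'^{-1} h_f^{-1}$ differs from $u_1$ by a \emph{conjugate} of a small neighbourhood by $h_f u_2$, and controlling that conjugate is precisely the content of the cited restriction result; it is not achieved by choosing \enquote{$V'$ with $V'V'V' \subseteq V$ or so}. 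The step can be repaired (fix the pair first, then choose the approximating neighbourhood small enough that its conjugate by the relevant element lands in $V$), but as written it does not go through.
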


\begin{proof}
\begin{enumerate}
	\item  Being topologically perfect for the induced topology means that $H \cap \overline{[H, H]} = H$ (where the closure is taken in $G$), i.e. $\overline{[H, H]} \supseteq H$. But since $H$ is a dense subgroup of $G$, we have
		\begin{equation*}
			\overline{[G, G]} \supseteq \overline{[H, H]} \supseteq [\overline{H}, \overline{H}] = [G, G],
		\end{equation*}
		i.e. $\overline{[G, G]} = \overline{[H, H]}$, hence topological perfectness of $H$ is indeed equivalent to the one of $G$.
	\item Let $G$ act continuously on a nonempty convex compact set $K$. If $H$ is amenable, then it has a fixed point $x \in K$. But since the action is continuous and $H$ is dense, the point $x$ is also fixed by $G$. Hence $G$ is amenable.

Conversely, assume $G$ is amenable and let $H$ act continuously on a nonempty convex compact set $K$. Since the group $\Homeo(K)$ is \Raikov-complete (\cite[Th.~6]{Arens_1946}, see also \cite[\S~2.3]{Gheysens_ulb}), we can extend the morphism $\phi\colon H \rightarrow \Homeo(K)$ to the upper completion $\ucompl{H}$, which contains $G$ (since $\ucompl{H}$ is actually isomorphic to the upper completion $\ucompl{G}$ of $G$, thanks to the universal property defining completions). In particular, we can extend the $H$-action on $K$ to a $G$-action. The latter has a fixed point by assumption, which is a fortiori fixed by $H$. 
	\item The Roelcke uniform structure of $H$ is equal to the restriction on $H$ of the Roelcke uniform structure of $G$ \cite[3.24]{RD_1981}. Therefore, again thanks to the universal property defining completions, the Roelcke completion $\Rcompl{H}$ of $H$ is isomorphic to the Roelcke completion $\Rcompl{G}$ of $G$. Hence $G$ is Roelcke-precompact if and only if $H$ is so.
	\item As for \eqref{it:amendense}, this fact follows from the possibility to extend any continuous morphism $\phi\colon H \rightarrow \Homeo(K)$ (for $K$ compact) to the upper completion of $H$, hence in particular to $G$. This brings a canonical bijection between $H$-flows and $G$-flows. Since $H$ is dense in $G$, a flow is minimal as an $H$-flow if and only if it is so as a $G$-flow. In particular, $\umf{H}$ is the space $\umf{G}$ endowed with the $H$-action.
\end{enumerate}
\end{proof}

Another way to phrase the above Proposition is to say that the notions considered therein can be studied directly on the upper completion of a topological group $G$ and that if $H$ is a dense subgroup of $G$, then $H$ and $G$ have isomorphic upper completions.

\begin{remark}
	Of course, this only works if $H$ is endowed with the \emph{induced} topology coming from $G$. For instance, a compact group (hence an amenable, Roelcke-precompact group which is equal to its universal minimal flow) can well contain a dense nonabelian free subgroup $F$ (take for instance the profinite completion of $F$). But with the discrete topology, $F$ is a non-amenable, non-Roelcke-precompact group with a nonmetrisable universal minimal flow. (Compact groups containing dense free subgroups are actually quite common, the interested reader will find various linear examples in~e.g.~\cite{Harpe_1983} or \cite{BG_2007}.)
\end{remark}

\section{Fully transitive groups of homeomorphisms}\label{sec:fullytransitive}

Let $X$ be any topological space. The (abstract) group $\Homeo(X)$ is naturally a subgroup of the group $\Sym(X)$ of all bijections in $X$. When $X$ is scattered, this inclusion is actually a topological embedding (when both groups are endowed with their respective pointwise convergence topology). As we saw in Proposition~\ref{prop:dense}, the notions we are interested in can be studied equivalently on $\Homeo(X)$ or on its closure in $\Sym(X)$. In order to describe the latter, we must understand what kind of permutations can be approximated (in the pointwise topology) by homeomorphisms, that is, we must understand what are the orbits of tuples of points of $X$ under the action of $\Homeo(X)$. To that end, we introduce in this section the notion of full transitivity.

\subsection{Similarity and full transitivity}

The notions of this subsection make sense for the abstract group $\Homeo(X)$ of any topological space---but will prove useful in the next subsections only when $X$ is scattered.

\begin{definition}
	Let $X$ be a topological space. We say that two points $x$ and $y$ in $X$ are \emph{similar} if there exist respective neighbourhoods $U_x$ and $U_y$ and a homeomorphism $h$ from $U_x$ onto $U_y$ such that $h(x) = y$. The equivalence classes for this relation are called \emph{similarity classes}.
\end{definition}

Observe that the homeomorphism $h$ is not required to be defined on the whole space. For instance, if $X$ is a simplicial regular graph, then all vertices are similar, even if the graph is not transitive. On the other hand, of course, homeomophisms have to respect the similarity relation, hence any notion of transitivity for homeomorphism groups has to take this relation into account.

\begin{remark}\label{rem:similarityordinal}
    Obviously, two similar points have the same Cantor--Bendixson rank. On the other hand, two points of rank zero (that is, two isolated points) are similar but this does not hold for higher rank, as can easily be seen by considering for $Y$ the one-point compactification of a countable discrete space, for $Z$ the one-point compactification of an uncountable discrete space and for $X$ the disjoint union of $Y$ and $Z$. Then $X$ has (exactly) two points of rank one but they are not similar since one has a basis of countable neighbourhoods whereas the other one only has uncountable ones.

	However, to make a bridge with the companion paper \cite{Gheysens_omega}, let us observe that similarity does boil down to having the same Cantor--Bendixson rank for \emph{ordinal spaces}. Indeed, as was observed there, a point $x$ in an odinal space has rank $\alpha > 0$ if and only if it can be written as $x' + \omega^\alpha$. Hence, it has a neighbourhood homeomorphic to $[0, \omega^\alpha]$, with $x$ identified with $\omega^\alpha$. Therefore, all points of rank $\alpha$ are similar to each other. The simplicity of the similarity relation in this context is fruitful, as we shall see in Section~\ref{sec:classif}.
\end{remark}

Let us now define the main notion of this paper.

\begin{definition}
	Let $X$ be a topological space and $G$ be an abstract group acting on $X$ by homeomorphisms. We say that $G$ is \emph{fully transitive} if it is as transitive as allowed by the similarity relation, that is: for any natural number $k$, for any $k$-tuples $(x_1, \dots, x_k)$ and $(y_1, \dots, y_k)$ of distinct points such that $x_i$ and $y_i$ are similar, there exists $g \in G$ such that $g(x_i) = y_i$ for each $i$. By abuse, we will also say that a space is \emph{fully transitive} if its homeomorphism group $\Homeo(X)$ is so (which is of course equivalent to require the existence of any fully transitive group acting on $X$).
\end{definition}

The point of this definition is its immediate translation into a denseness result.

\begin{proposition}\label{prop:fulltransdense}
	Let $X$ be a topological space and $G$ be an abstract group acting faithfully on $X$ by homeomorphisms. Consider $G$ as a subgroup of $\Sym(X)$ and let $\overline{G}$ be its closure (for the topology of discrete pointwise convergence on $\Sym(X)$). Then $G$ is fully transitive if and only if $\overline{G}$ is equal to
	\begin{equation*}
		\prod_i \Sym(X_i),
	\end{equation*}
	where $X = \bigsqcup_i X_i$ is the partition of $X$ into similarity classes.\qed
\end{proposition}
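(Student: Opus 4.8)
The plan is to identify $\overline{G}$ directly. Write $P = \prod_i \Sym(X_i)$ and realise it inside $\Sym(X)$ as the subgroup of permutations fixing each similarity class $X_i$ setwise; under this identification one checks painlessly that the product topology on $P$ agrees with the topology induced from $\Sym(X)$ (both are generated by prescribing a permutation on a finite set of points, a set meeting only finitely many of the $X_i$), and that $P$ is \emph{closed} in $\Sym(X)$, being $\bigcap_{x \in X}\{g : g(x) \in X_{i(x)}\}$, an intersection of clopen sets (here $i(x)$ denotes the index with $x \in X_{i(x)}$). The first thing to record is that $G \subseteq P$: if $h$ is a homeomorphism of $X$ and $x \in X$, then the restriction of $h$ to a neighbourhood of $x$ is a homeomorphism onto a neighbourhood of $h(x)$ carrying $x$ to $h(x)$, so $x$ and $h(x)$ are similar; hence every homeomorphism preserves each $X_i$. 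Since $P$ is closed, $\overline{G} \subseteq P$ holds unconditionally, and the proposition reduces to the equivalence: $G$ is fully transitive if and only if $G$ is dense in $P$.

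For the implication \enquote{fully transitive $\Rightarrow$ dense in $P$}, I would take $h \in P$ together with an arbitrary basic neighbourhood $\{f : f(x_j) = h(x_j),\ 1 \le j \le k\}$, the $x_j$ being distinct. The points $y_j := h(x_j)$ are then distinct (as $h$ is injective) and satisfy $x_j \sim y_j$ (as $h \in P$), so full transitivity supplies $g \in G$ with $g(x_j) = y_j$ for all $j$; that is, $g$ lies in the given neighbourhood, whence $h \in \overline{G}$.

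For the converse, starting from distinct tuples $(x_1, \dots, x_k)$ and $(y_1, \dots, y_k)$ with $x_j \sim y_j$, I would first build an element $h \in P$ with $h(x_j) = y_j$ for every $j$. Inside each block $X_i$, the set $J_i$ of indices $j$ with $x_j \in X_i$ coincides with the set of indices $j$ with $y_j \in X_i$ (since $x_j \sim y_j$), so the prescription $x_j \mapsto y_j$, $j \in J_i$, is a bijection between two equinumerous finite subsets of $X_i$; it extends to a bijection $h_i$ of $X_i$, and assembling the $h_i$ gives $h \in P = \overline{G}$. Density of $G$ in $P$ then yields $g \in G$ agreeing with $h$ on $\{x_1, \dots, x_k\}$, i.e.\ $g(x_j) = y_j$ for all $j$, which witnesses full transitivity.

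Everything here is elementary; the only places calling for (routine) care are the identification of the topology on $P$ with the one induced from $\Sym(X)$ together with the closedness of $P$ there, and — in the converse — the extension of a finite partial matching within a single similarity class to a genuine element of $\prod_i \Sym(X_i)$, where one must note separately that the complements of two equal finite subsets of $X_i$ have the same cardinality, whether $X_i$ is finite or infinite. I do not expect any substantial obstacle.
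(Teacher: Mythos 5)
Your proof is correct and is precisely the argument the paper treats as immediate (the proposition is stated with a bare \qed); the reduction to \enquote{$G$ dense in the closed subgroup $\prod_i\Sym(X_i)$} and the two-way translation between finite point conditions and full transitivity are exactly the intended reasoning. The two points you flag as needing care (closedness of $P$ and extending a finite partial matching within a class, including the finite-class case) are handled correctly.
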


An easy example of a space which fails to be fully transitivite is given by the real line (since a homeomorphism fixing $a < b$ cannot send a point of $[a, b]$ outside this interval, whereas all points are similar). On the other hand, we will see below (Section~\ref{sec:examples}) that full transitivity is quite common in scattered spaces and holds in particular for all locally compact scattered spaces (and therefore for all ordinal spaces). For now, we proceed with the impact of full transitivity on the structure of homeomorphism groups.

\subsection{Topological properties}

Let us explore the notions of the preceding subsection for the case where $X$ is scattered, so that $\Homeo(X)$ becomes a topological group. Note that, since the group $\Sym(X)$ is always \Raikov-complete (Proposition~\ref{prop:symamenrprec}), the closure of $\Homeo(X)$ in $\Sym(X)$, which is identified for fully transitive groups by Proposition~\ref{prop:fulltransdense}, is nothing but its upper completion.

For the sake of concision, we will state our results directly for the whole group $\Homeo(X)$. They would hold also for any fully transitive subgroup $H$ but this does not genuinely extend their scope, since $H$ would then a fortiori be dense in $\Homeo(X)$---hence would fit in the framework of Proposition~\ref{prop:dense}.

\begin{theorem}\label{th:amenRprec}
	Let $X$ be a fully transitive scattered space. Then the group  $\Homeo(X)$ is amenable and Roelcke-precompact. 
\end{theorem}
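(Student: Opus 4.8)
The plan is to reduce everything to the corresponding statement about products of symmetric groups, using the machinery already assembled. First I would invoke Proposition~\ref{prop:fulltransdense}: since $X$ is fully transitive and $\Homeo(X)$ acts faithfully on $X$ (it is literally a subgroup of $\Sym(X)$), its closure $\overline{\Homeo(X)}$ inside $\Sym(X)$ is exactly $\prod_i \Sym(X_i)$, where $X = \bigsqcup_i X_i$ is the partition into similarity classes. Because $X$ is scattered, the inclusion $\Homeo(X) \hookrightarrow \Sym(X)$ is a topological embedding, so $\Homeo(X)$ is a \emph{dense} subgroup of $\prod_i \Sym(X_i)$ endowed with the induced topology.

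Next I would establish amenability and Roelcke-precompactness for the product $\prod_i \Sym(X_i)$. Each factor $\Sym(X_i)$ is amenable and Roelcke-precompact by Proposition~\ref{prop:symamenrprec}. Amenability passes to arbitrary products (this is the statement cited in the proof of Proposition~\ref{prop:umfproduct}, namely \cite[Lemma~15]{Gheysens_omega}, whose proof works for amenability). Roelcke-precompactness also passes to arbitrary products: the lower uniform structure of a product is the product of the lower uniform structures, and a product of precompact uniform spaces is precompact (Tychonoff); concretely, a basic identity neighbourhood of $\prod_i \Sym(X_i)$ only constrains finitely many coordinates, say $i \in F$, so one can take $UFU$ with $F$ a finite set built from the finitely many factors involved and the identity in the rest. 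So $\prod_i \Sym(X_i)$ is amenable and Roelcke-precompact.

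Finally I would transfer these two properties down to the dense subgroup $\Homeo(X)$ by Proposition~\ref{prop:dense}, parts (2) and (3), which assert precisely that amenability and Roelcke-precompactness are shared between a topological group and any dense subgroup (with the induced topology). This gives that $\Homeo(X)$ is amenable and Roelcke-precompact, as claimed.

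I do not expect a serious obstacle here: this theorem is the payoff of the three-step scheme announced in the introduction, and all three steps are already in place. The only point requiring a small amount of care is the Roelcke-precompactness of an \emph{arbitrary} (possibly infinite) product of Roelcke-precompact groups, since the Roelcke structure is the subtlest of the four and does not behave well under all operations; but for products it does behave well (the Roelcke structure of a product is the product of the Roelcke structures, cf.~the analogous fact for the right uniform structure used in Proposition~\ref{prop:umfproduct}), so precompactness is preserved. Everything else is a direct citation of earlier results.
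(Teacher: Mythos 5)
Your proposal is correct and follows exactly the paper's argument: reduce via Propositions~\ref{prop:fulltransdense} and~\ref{prop:dense} to the product $\prod_i \Sym(X_i)$, then use Proposition~\ref{prop:symamenrprec} together with the stability of amenability and Roelcke-precompactness under arbitrary products (the paper cites \cite[3.35]{RD_1981} for the latter, which is the product-of-lower-uniformities fact you spell out by hand). Nothing is missing.
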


\begin{proof}
	By Propositions~\ref{prop:dense} and \ref{prop:fulltransdense}, we only need to prove these properties for arbitrary products of symmetric groups.  But both properties are preserved by products (see e.g.~\cite[Lemma~15]{Gheysens_omega} for amenability and \cite[3.35]{RD_1981} for Roelcke-precompactness) and we already observed in Proposition~\ref{prop:symamenrprec} that symmetric groups are amenable and Roelcke-precompact.
\end{proof}

The same strategy allows to compute the universal minimal flow:

\begin{theorem}\label{th:umflo}
	Let $X$ be a fully transitive scattered space. Then the universal minimal flow of $\Homeo(X)$ is 
\begin{equation*}
    \prod_i \LO(X_i),
\end{equation*}
	where $X = \bigsqcup_i X_i$ is the partition of $X$ into similarity classes.
\end{theorem}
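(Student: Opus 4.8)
The plan is to combine Proposition~\ref{prop:dense}, Proposition~\ref{prop:fulltransdense}, and Proposition~\ref{prop:umfproduct} in exactly the same way Theorem~\ref{th:amenRprec} combined the first two with the preservation of amenability and Roelcke-precompactness under products. First, since $X$ is fully transitive, Proposition~\ref{prop:fulltransdense} identifies the closure of $\Homeo(X)$ in $\Sym(X)$ with $\prod_i \Sym(X_i)$, where $X = \bigsqcup_i X_i$ is the partition into similarity classes; as noted just after Proposition~\ref{prop:symamenrprec}, this closure is the \Raikov-completion of $\Homeo(X)$. Because $\Homeo(X)$ is dense in it, part~(4) of Proposition~\ref{prop:dense} gives $\umf{\Homeo(X)} = \umf{\prod_i \Sym(X_i)}$ as a $\Homeo(X)$-space. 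So the whole computation reduces to showing that the universal minimal flow of a product of symmetric groups is the product of the universal minimal flows, i.e.\ that $\umf{\prod_i \Sym(X_i)} = \prod_i \LO(X_i)$.

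Next I would invoke Proposition~\ref{prop:umfproduct} with $G_i = \Sym(X_i)$. The hypothesis of that proposition — that each $G_i$ contains a closed extremely amenable co-precompact subgroup $H_i$ with $\umf{G_i} = \compl{G_i/H_i}$ — is precisely the content of Proposition~\ref{prop:umfsym}: one takes $H_i = \Aut(X_i, <_i)$ for a suitable ($\omega$-homogeneous, or arbitrary if $X_i$ is finite) linear order $<_i$ on $X_i$, and then $\umf{\Sym(X_i)} = \LO(X_i) = \compl{\Sym(X_i)/\Aut(X_i, <_i)}$. Applying Proposition~\ref{prop:umfproduct} therefore yields
\begin{equation*}
    \umf{\prod_i \Sym(X_i)} = \prod_i \umf{\Sym(X_i)} = \prod_i \LO(X_i),
\end{equation*}
which, chained with the reduction of the previous paragraph, gives the theorem.

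The one point that requires a word of care, rather than being a genuine obstacle, is the bookkeeping of which group the flow carries an action of: Proposition~\ref{prop:dense}~(4) transports $\umf{\prod_i \Sym(X_i)}$ back to an $\Homeo(X)$-flow via the dense embedding, so the statement should be read as identifying $\umf{\Homeo(X)}$ with $\prod_i \LO(X_i)$ equipped with the action of $\Homeo(X)$ restricted from the product action of $\prod_i \Sym(X_i)$. There is no further work: all the analytic content (extreme amenability of $\Aut(X_i, <_i)$, co-precompactness, commutation of completion and of right uniform structures with products) is already packaged into Propositions~\ref{prop:umfproduct} and~\ref{prop:umfsym}. Thus the proof is a short three-line assembly of earlier results, exactly parallel to the proof of Theorem~\ref{th:amenRprec}.
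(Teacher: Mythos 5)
Your proposal is correct and follows exactly the paper's own argument: reduce via Propositions~\ref{prop:dense} and~\ref{prop:fulltransdense} to computing the universal minimal flow of $\prod_i \Sym(X_i)$, then apply Propositions~\ref{prop:umfproduct} and~\ref{prop:umfsym}. The only difference is that you spell out the bookkeeping the paper leaves implicit.
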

(We recall that $\LO(X_i)$ is the space of linear orders on the set $X_i$, defined in Subsection~\ref{sec:sym}.)

\begin{proof}
	By Propositions~\ref{prop:dense} and~\ref{prop:fulltransdense}, we only need to prove that the above compact set is indeed the universal minimal flow of $\prod_i \Sym(X_i)$. This follows from Propositions~\ref{prop:umfproduct} and~\ref{prop:umfsym}.
\end{proof}

\begin{remark}
	In particular, for a fully transitive scattered space $X$, the universal minimal flow of $\Homeo(X)$ is metrisable if and only if all similarity classes of $X$ are countable and at most countably many of them are not singletons, e.g.~if $X$ is countable. 
	Note that a \emph{metrisable} countable scattered space is homeomorphic to a subspace of an ordinal space \cite[IV]{KU_1953} (the \enquote{countable} assumption is actually not necessary here, see \cite[Cor.~5]{Telgarsky_1968}). There exist however countable scattered space that cannot be embedded into an ordinal space. For instance, there exist points $p \in \SC \Nat$ (the Stone--\v{C}ech compactification of the discrete space $\Nat$) such that the (countable and scattered) subspace $\Nat \cup \{p\}$ does not admit any scattered compactification, hence cannot be embedded into an ordinal space, cf.~\cite{Solomon_1976, Telgarsky_1977b}. 
\end{remark}

\subsection{Topological structure}

We now investigate the closed normal subgroups of a fully transitive group.

\begin{proposition}
	Let $X$ be a fully transitive scattered space. Assume that each similarity class is infinite. Then $\Homeo(X)$ is topologically perfect.
\end{proposition}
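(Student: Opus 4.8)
The plan is to reduce the statement to the topological perfectness of a product of symmetric groups, exactly as in the proofs of Theorems~\ref{th:amenRprec} and~\ref{th:umflo}. By Propositions~\ref{prop:dense} and~\ref{prop:fulltransdense}, the group $\Homeo(X)$ is dense in $\prod_i \Sym(X_i)$, where $X = \bigsqcup_i X_i$ is the partition into similarity classes, and topological perfectness passes between a group and a dense subgroup. Since each similarity class $X_i$ is infinite by hypothesis, each factor $\Sym(X_i)$ is an infinite symmetric group, hence topologically perfect (it is topologically simple and nonabelian, as recalled in Subsection~\ref{sec:sym}). So the whole content of the proof is: a product of topologically perfect groups is topologically perfect.

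For that last point, I would argue as follows. Let $G = \prod_i G_i$ with each $G_i$ topologically perfect, i.e.\ $\overline{[G_i, G_i]} = G_i$. I want $\overline{[G, G]} = G$. Since $[G, G] \supseteq \prod_i^{\mathrm{restr}} [G_i, G_i]$ is not quite the slick move (the restricted product is dense but one still must handle the closure), the cleanest route is: a basic identity neighbourhood of $G$ has the form $U = \prod_{i \in F} U_i \times \prod_{i \notin F} G_i$ for a finite set $F$ and identity neighbourhoods $U_i \subseteq G_i$. Given $g = (g_i)_i \in G$, I must find an element of $[G, G]$ lying in $gU$. For $i \notin F$ there is nothing to do; for each $i \in F$, topological perfectness of $G_i$ gives $c_i \in [G_i, G_i]$ with $c_i \in g_i U_i$. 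Writing each $c_i$ as a finite product of commutators in $G_i$ and padding with identities in the other coordinates, the tuple $(c_i)_{i \in F}$ completed by $g_i$ (for $i \notin F$) is not literally a commutator, but the finitely many coordinates in $F$ can be realised by a genuine element of $[G, G]$: take commutators in $G$ whose coordinates outside $F$ are trivial and whose coordinates in $F$ reproduce the chosen expressions. Hence $[G, G]$ meets $gU$, so $\overline{[G, G]} = G$.

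Concretely, I would phrase this via the projection maps: for each $i$, the projection $\pi_i \colon G \to G_i$ is a continuous surjective homomorphism, so $\pi_i([G,G]) = [G_i, G_i]$, and more importantly, for any finite $F$ the projection $\pi_F \colon G \to \prod_{i \in F} G_i$ is continuous, open and surjective, hence $\pi_F\bigl(\overline{[G,G]}\bigr) = \overline{\pi_F([G,G])} = \overline{[\prod_{i\in F} G_i, \prod_{i\in F}G_i]}$, which for a \emph{finite} product of topologically perfect groups equals $\prod_{i \in F} G_i$ (finite products are immediate: $[\,G_1 \times G_2, G_1 \times G_2\,] = [G_1,G_1] \times [G_2,G_2]$, and the closure of a finite product is the product of the closures). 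Since the cylinder sets over finite $F$ form a neighbourhood basis of any point and $\overline{[G,G]}$ surjects onto every finite subproduct, $\overline{[G,G]}$ is dense, hence (being closed) all of $G$.

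The argument has no real obstacle; the one point that deserves a sentence of care is the passage from finitely many coordinates to a genuine element of $[G,G]$ (equivalently, that $\pi_F$ is open and that commutator subgroups behave well under continuous open surjections), together with the trivial-but-worth-stating fact that the commutator subgroup of a finite product is the product of the commutator subgroups. Everything else is bookkeeping, and the reduction to symmetric groups is verbatim the one already used twice above.
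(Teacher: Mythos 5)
Your proof is correct and follows exactly the paper's route: reduce via Propositions~\ref{prop:dense} and~\ref{prop:fulltransdense} to the product $\prod_i \Sym(X_i)$ of infinite symmetric groups, which are topologically perfect, and then use that a product of topologically perfect groups is topologically perfect. The only difference is that the paper asserts this last closure-under-products fact without proof, whereas you supply the (correct) verification via finite subproducts.
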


\begin{proof}
	By Proposition~\ref{prop:dense}, we only need to prove that an arbitrary product of infinite symmetric groups is topologically perfect. But $\Sym(X)$ is so whenever $X$ is infinite and a product of topologically perfect groups is again topologically perfect.
\end{proof}

We can actually classify all closed normal subgroups. For a subset $A$ of $X$, we write $\Fixa(A)$ for its fixator, i.e.~the subgroup of $\Homeo(X)$ fixing pointwise $A$ (if $A$ is empty, $\Fixa(A)$ is then the whole $\Homeo(X)$).

\begin{proposition}\label{prop:classnormal}
	Let $X$ be a fully transitive scattered space. Assume that each similarity class is infinite. Then any closed normal subgroup of $\Homeo(X)$ is of the form $\Fixa(A)$, where $A$ is an arbitrary union of similarity classes.
\end{proposition}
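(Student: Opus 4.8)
The plan is to run the paper's three-step strategy once more. Write $X = \bigsqcup_i X_i$ for the partition into similarity classes; by hypothesis each $X_i$ is infinite. Put $G = \Homeo(X)$. Since $G \hookrightarrow \Sym(X)$ is a topological embedding into a \Raikov-complete group, the closure of $G$ in $\Sym(X)$ is its \Raikov-completion, and by Proposition~\ref{prop:fulltransdense} it equals $\overline G = \prod_i \Sym(X_i)$. Two soft observations will be invoked at the end. First, if $N$ is a closed normal subgroup of $G$, then its closure $\overline N$ in $\overline G$ is again normal: conjugation by $g \in G$ is a homeomorphism of $\overline G$ carrying $N$ onto $N$, hence $\overline N$ onto $\overline N$, and conjugation by an arbitrary element of $\overline G$ then also preserves $\overline N$, by density of $G$ and continuity. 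Second, as $N$ is closed in $G$ for the induced topology, $N = \overline N \cap G$.

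The core of the argument is the classification of the closed normal subgroups of $\overline G = \prod_i \Sym(X_i)$, all $X_i$ being infinite: I claim they are exactly the partial products $M_J := \prod_{i \in J} \Sym(X_i) \times \prod_{i \notin J} \{1\}$, for $J$ an arbitrary set of indices. Let $M$ be a closed normal subgroup of $\overline G$ and let $J = \{ i : p_i(M) \neq \{1\} \}$, where $p_i$ is the $i$-th coordinate projection; then $M \subseteq M_J$ is immediate. Fix $i \in J$ and $m \in M$ with $i$-th coordinate $m_i \neq 1$. The group $\Sym(X_i)$ is non-abelian with trivial centre --- the centre being a closed normal subgroup, it is trivial or dense by topological simplicity of infinite symmetric groups (Subsection~\ref{sec:sym}), hence trivial --- so $m_i$ is not central, and there is $g \in \Sym(X_i)$, viewed in $\overline G$ as supported on coordinate $i$, with $[m,g] \neq 1$. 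Now $[m,g]$ belongs to $M$ and is supported on coordinate $i$, so $M$ meets the $i$-th factor in a nontrivial normal subgroup of $\Sym(X_i)$; by topological simplicity this subgroup is dense in the factor, and as $M$ is closed, $M$ contains the whole $i$-th factor. Hence $M$ contains all the factors indexed by $J$, so a dense subgroup of $M_J$, and being closed, $M = M_J$.

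Now combine the two. By the classification, $\overline N = M_J$ for some index set $J$; set $A = \bigsqcup_{i \notin J} X_i$, an arbitrary union of similarity classes. Every homeomorphism of $X$ lies in $\prod_i \Sym(X_i)$ (it preserves the similarity relation, hence each class $X_i$), and an element of $\prod_i \Sym(X_i)$ lies in $M_J$ exactly when it fixes $A$ pointwise; therefore $M_J \cap \Homeo(X) = \Fixa(A)$. We conclude $N = \overline N \cap G = \Fixa(A)$. (Conversely, for any union of similarity classes $A$ one has $\Fixa(A) = M_J \cap \Homeo(X)$ with $M_J$ closed and normal in $\overline G$, so $\Fixa(A)$ is indeed a closed normal subgroup of $\Homeo(X)$, and the classification is complete.)

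The main obstacle is the middle step, the classification of the closed normal subgroups of the product. It rests on the two genuinely nontrivial inputs already available in the paper --- that infinite symmetric groups are topologically simple and (consequently) centreless --- together with the routine commutator manipulation showing that a normal subgroup of a product of non-abelian simple groups which projects nontrivially onto a factor must contain that factor. This is also precisely where the hypothesis that each similarity class is infinite is used: a finite factor $\Sym(X_i)$ would carry the additional proper closed normal subgroup $\Alt(X_i)$, producing closed normal subgroups of $\overline G$ --- and hence of $\Homeo(X)$ --- outside the claimed list.
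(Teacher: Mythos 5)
Your proof is correct and follows essentially the same route as the paper's: pass to the closure $\overline N$ of $N$ in $\prod_i \Sym(X_i)$, observe that it is a closed normal subgroup there, invoke the topological simplicity of infinite symmetric groups, and recover $N$ as $\overline N \cap \Homeo(X)$. The only difference is that you spell out, via the commutator argument, the classification of closed normal subgroups of the product --- a step the paper compresses into the single line \enquote{Since $\Sym(X_i)$ is topologically simple, we have $\overline{N} = H$} --- so this is a welcome amplification rather than a divergence.
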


\begin{proof}
	It is obvious that such fixators are closed and normal. Conversely, let $N$ be a closed normal subgroup of $\Homeo(X)$ and let $A$ be the set of points fixed by $N$. In particular, $N$ is a closed subgroup of $\Fixa(A)$. It is sufficient to show that $N$ is dense in $\Fixa(A)$ and that $A$ is a union of similarity classes.

	By normality, $A$ has to be invariant by $\Homeo(X)$: full transitivity then forces $A$ to be saturated for the similarity relation, i.e.~to be a union of similarity classes. Let us write $A = \bigcup_J X_i$, where $X = \bigsqcup_I X_i$ is the decomposition of $X$ into similarity classes and $J \subseteq I$ are sets of indices. By definition of $A$, the group $N$ is (in $\Sym(X)$) a subgroup of 
	\begin{equation*}
		H = \prod_{i \in J} \{\id_{X_i}\} \times \prod_{i \in I \setminus J} \Sym(X_i)
	\end{equation*}
	and, for any $i \not\in J$, the image of $N$ in $\Sym(X_i)$ is not trivial.

	But the closure $\overline{N}$ of $N$ in the group $\prod_I \Sym(X_i)$ is a closed normal subgroup (as $\Homeo(X)$ is dense in the latter). Since $\Sym(X_i)$ is topologically simple, we have $\overline{N} = H$. By construction, we have $N \leq \Fixa(A) \leq H$, hence $N$ is dense in $\Fixa(A)$, as expected. 
\end{proof}

\begin{remark}\label{rem:classnormal}
	For ease of statement, we have assumed here that each similarity class is infinite, so that the corresponding symmetric groups are topologically simple. Knowing that the derived subgroup of $\Sym(n)$ is $\Alt(n)$ (which is nontrivial for $n \geq 3$), it is easy to describe the closed derived subgroup of $\Homeo(X)$ in full generality: it is made of all homeomorphisms that induce an even permutation on the finite similarity classes.

	Similarly, knowing the proper nontrivial normal subgroups of $\Sym(n)$, we can also describe the closed normal subgroups of $\Homeo(X)$ in full generality. They are made of all homeomorphisms that are trivial on $\bigcup_J X_i$, induce even permutations on $X_i$ for $i \in K$ and induce the identity or a fixed-point free involution on $X_i$ for $i \in L$, where $J$, $K$ and $L$ are three disjoint subsets of the index set of all similarity classes, such that $X_i$ is finite of size at least~$3$ for $i \in K$ and finite of size $4$ for $i \in L$ ($J$, $K$ or $L$ may be empty).
\end{remark}

\subsection{Remarks on the abstract structure}\label{sec:abstract}

For the sake of completeness, let us observe that the results of this section (or indeed of this whole paper) only reveal information on the group $\Homeo(X)$ as a topological group and not as an abstract group.

As a first illustration, let us observe that the group $\Homeo(X)$ is never abstractly amenable when $X$ is infinite, scattered, Hausdorff, and completely regular\footnote{\emph{Complete regularity} means that points can be separated from closed sets by continuous real-valued functions or, equivalently, that the topology can be defined by a uniform structure. It implies that \emph{closed} neighbourhoods of a point form a basis of neighbourhoods.}. Indeed, either $X$ is just an infinite discrete space (in which case $\Homeo(X) = \Sym(X)$ contains nonabelian free subgroups) or $X$ contains a point $x$ of Cantor--Bendixson rank one. Since $x$ is of rank one, there must exist a subset $V$ of isolated points such that $V \cup \{x\}$ is a neighbourhood of $x$. Furthermore, $V$ must be infinite since $X$ is Hausdorff and $x$ is not isolated. By complete regularity, we may further assume that $V \cup \{x\}$ is closed. Therefore, any permutation of $V$ can be extended to a homeomorphism of $X$ (fixing the complement of $V$). So $\Homeo(X)$ contains a copy of $\Sym(V)$, which itself contains nonabelian free subgroups.

As a second illustration, observe that the subgroup of finitely-supported homeomorphisms of an infinite scattered space $X$ is a normal subgroup which is not closed (its closure is the subgroup $\Fixa(X^{(1)})$). More interestingly, let $\Lambda_\lambda$ denote the increasing union of $\Fixa(X^{(\alpha)})$ for $\alpha < \lambda$, where $\lambda$ is a limit ordinal. Then $\Lambda_\lambda$ is a dense normal subgroup of $\Fixa(X^{(\lambda)})$, in general not equal to it. It has been shown for instance that \emph{any} countable group can appear as an abstract quotient of the form $\Homeo(X) / \Lambda_{\omega_1}$, for $X$ a compact scattered space of Cantor--Bendixson rank $\omega_1 + 1$ \cite[Th.~3.9]{DS_1992}.

\section{Examples and counterexamples of fully transitive groups}\label{sec:examples}

We now finally show that full transitivity occurs under mild topological assumptions on the scattered space $X$, for instance local compactness. On the other hand, Subsection~\ref{sec:exotic} is devoted to the constructions of very general scattered spaces that show that amenability and Roelcke-precompactness of $\Homeo(X)$ do not hold for \emph{all} scattered spaces.

\subsection{Zero-dimensional spaces}

Here is the key technical result of this paper.

\begin{theorem}\label{th:zerodim}
	Let $X$ be a Hausdorff zero-dimensional space\footnotemark. Then $\Homeo(X)$ is fully transitive.
\end{theorem}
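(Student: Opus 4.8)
The plan is to prove full transitivity directly by a back-and-forth--style argument that extends a finite partial bijection respecting similarity to a genuine homeomorphism, exploiting the clopen basis. So fix $k$, fix $k$-tuples $(x_1,\dots,x_k)$ and $(y_1,\dots,y_k)$ of distinct points with $x_i$ similar to $y_i$ for each $i$; I want $g\in\Homeo(X)$ with $g(x_i)=y_i$. By the definition of similarity there are neighbourhoods $U_i\ni x_i$, $V_i\ni y_i$ and homeomorphisms $h_i\colon U_i\to V_i$ with $h_i(x_i)=y_i$. Since $X$ is Hausdorff and zero-dimensional, I can first separate the $x_i$ (and the $y_i$) by disjoint clopen sets and shrink each $U_i$ (resp.\ $V_i$) to a clopen neighbourhood contained in the $i$-th piece, so that the $U_i$ are pairwise disjoint clopen sets, the $V_i$ are pairwise disjoint clopen sets, and each $h_i\colon U_i\to V_i$ is a homeomorphism with $h_i(x_i)=y_i$. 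The obvious candidate is then to glue the $h_i$ together on $\bigcup_i U_i$ and patch the complement; the difficulty is that $\bigcup U_i$ and $\bigcup V_i$ need not coincide, nor need their complements be homeomorphic in any controlled way.

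The key trick to get around that is to arrange, for each $i$ \emph{one at a time}, a homeomorphism supported in a small clopen set that moves $x_i$ to $y_i$, and to make these supports disjoint so the composite works. Concretely, treat the indices in turn. Having already handled $x_1\mapsto y_1,\dots,x_{i-1}\mapsto y_{i-1}$ by a homeomorphism $g_{i-1}$ whose support avoids the remaining points, I look at $x_i':=g_{i-1}(x_i)$ and $y_i$, which are still similar. If $x_i'=y_i$ there is nothing to do; otherwise, using Hausdorffness and zero-dimensionality, choose a clopen set $W$ with $x_i'\in W$, $y_i\in W$ but $W$ disjoint from all the remaining points $x_{i+1}',\dots$ and $y_{i+1},\dots$ and from $y_1,\dots,y_{i-1}$. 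Inside $W$ I need a homeomorphism fixing $X\setminus W$ and swapping (or at least sending) $x_i'$ to $y_i$. This is where I use similarity once more together with zero-dimensionality: shrink the similarity homeomorphism $h$ between a neighbourhood of $x_i'$ and a neighbourhood of $y_i$ to a homeomorphism between two \emph{disjoint} clopen subsets $P\ni x_i'$ and $Q\ni y_i$ of $W$, and define the homeomorphism of $X$ to be $h$ on $P$, $h^{-1}$ on $Q$, and the identity elsewhere; this is continuous because $P,Q$ are clopen, it is an involution, and it maps $x_i'$ to $y_i$ while fixing everything outside $P\cup Q\subseteq W$. Composing all these $i$ steps yields the desired $g$.

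The main obstacle, and the step that needs the most care, is exactly the one just sketched: obtaining \emph{disjoint} clopen witnesses $P$ and $Q$ for the similarity of $x_i'$ and $y_i$ inside the prescribed clopen set $W$. One has to check that the two neighbourhoods coming from the similarity relation can be shrunk to clopen sets whose images under $h$ are clopen (automatic, as $h$ is a homeomorphism of clopen subspaces and clopenness is local), and — crucially — that one can further shrink so that a clopen $P\ni x_i'$ and its clopen image $h(P)\ni y_i$ become disjoint. When $x_i'\ne y_i$, separate them by disjoint clopen sets $A\ni x_i'$, $B\ni y_i$ with $A,B\subseteq W$; then $P:=A\cap h^{-1}(B)$ is a clopen neighbourhood of $x_i'$ with $h(P)\subseteq B$, while $P\subseteq A$, so $P$ and $h(P)$ are disjoint, as desired. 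Everything else is routine bookkeeping about finite families of clopen sets, and the disjointness of successive supports guarantees the composite homeomorphism behaves correctly on all $k$ points simultaneously.
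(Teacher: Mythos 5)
Your construction is essentially the one in the paper: reduce to moving one point at a time while fixing a finite set, and realise each single move by the involution that equals $h$ on a clopen set $P$, equals $h^{-1}$ on the disjoint clopen set $Q=h(P)$, and is the identity elsewhere. Your justification that the two similarity neighbourhoods can be shrunk to \emph{disjoint clopen} sets via $P:=A\cap h^{-1}(B)$ is exactly the point the paper's proof leaves implicit, and it is correct.

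There is, however, one step that fails as written: you require the support $W$ of the $i$-th move to avoid the not-yet-treated points $x_{i+1}',\dots$ and $y_{i+1},\dots$. This is impossible in general, because the two tuples are each made of distinct points but may overlap with one another: for a transposition ($x_1=a$, $y_1=b$, $x_2=b$, $y_2=a$, with $a$ and $b$ similar), the first support must contain both $a$ and $b$, yet you ask it to avoid $x_2'=b$ and $y_2=a$. The condition is also unnecessary. All the $i$-th move needs to avoid is the finite set of already-placed targets $y_1,\dots,y_{i-1}$ (so as not to disturb them); the later points may be displaced freely, since you are in any case tracking their images $x_j'=g_i(x_j)$, and injectivity of $g_i$ together with the distinctness of the $x_j$ (resp.\ of the $y_j$) guarantees that $x_{i+1}'$ and $y_{i+1}$ still lie outside $\{y_1,\dots,y_i\}$ when you come to treat them. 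With that one requirement dropped, your argument is complete and coincides with the paper's proof.
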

\footnotetext{Recall that a $\Ti[0]$ zero-dimensional space is automatically Hausdorff.}

\begin{proof}
  By induction, it is enough to show that for any finite set $F$ of $X$, the fixator $\Fixa(F)$ is transitive on $X_i \setminus F$ for each similarity class $X_i$.

Let thus $x$ and $y$ be two similar points not in $F$. By definition, there exist two respective neighbourhoods $U_x$ and $U_y$ and a homeomorphism $h\colon U_x \rightarrow U_y$ such that $h(x) = y$. Since $X$ is Hausdorff, we can assume that $U_x$ and $U_y$ are disjoint and avoid the finite set $F$. Moreover, since $X$ is zero-dimensional, we can also assume that $U_x$ and $U_y$ are clopen. Therefore, the map $g$ defined by
\begin{equation*}
    g(z) = \begin{cases}
h(z) &\text{if } z \in U_x \\
h^{-1}(z) &\text{if } z \in U_y \\
z &\text{otherwise} \\
\end{cases}
\end{equation*}
is a homeomorphism of $X$ that fixes $F$ and swaps $x$ and $y$.
\end{proof}

\begin{remark}
	In a scattered space $X$, if the Cantor--Bendixson rank of a point $x$ is $\alpha$, then there must exist a neighbourhood $U$ of $x$ such that all points of $U \setminus \{x\}$ have rank $< \alpha$. In particular, in the above proof, we can make it so that the homeomorphism $g$ is trivial on points of sufficiently high rank. In particular, if $X^{(\lambda)}$ is a singleton for some limit ordinal $\lambda$, then the subgroup $\Lambda_\lambda$ of $\Homeo(X)$ is already fully transitive, where $\Lambda_\lambda$ is the union of $\Fixa(X^{(\alpha)})$ for $\alpha < \lambda$ (compare with Subsection~\ref{sec:abstract}). 
\end{remark}

\begin{remark}
	A fitting version of full transitivity was established and used for ordinal spaces in \cite[Lemma~12]{Gheysens_omega}. Indeed, when $X$ is an ordinal space, two points are similar if and only if they have the same Cantor--Bendixson rank (Remark~\ref{rem:similarityordinal}).
\end{remark}

In particular, all the results of Section~\ref{sec:fullytransitive} apply to zero-dimensional scattered spaces. Let us point out:

\begin{corollary}\label{cor:loccompscatt}
	Let $X$ be a scattered space which is locally compact \emph{or} metrisable. Then $\Homeo(X)$ is amenable and Roelcke-precompact. Moreover, its universal minimal flow is
\begin{equation*}
    \prod_i \LO(X_i),
\end{equation*}
	where $X = \bigsqcup_i X_i$ is the partition of $X$ into similarity classes.
\end{corollary}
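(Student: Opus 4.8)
The plan is to deduce Corollary~\ref{cor:loccompscatt} from Theorem~\ref{th:zerodim} together with Theorems~\ref{th:amenRprec} and~\ref{th:umflo}, so the only genuine content is showing that a scattered space which is locally compact \emph{or} metrisable is (or may be treated as) Hausdorff and zero-dimensional. Once zero-dimensionality is in hand, Theorem~\ref{th:zerodim} gives full transitivity, and then amenability, Roelcke-precompactness, and the computation of the universal minimal flow as $\prod_i \LO(X_i)$ are exactly the statements of Theorems~\ref{th:amenRprec} and~\ref{th:umflo}. So the proof splits into two cases according to the hypothesis, each of which must be reduced to ``Hausdorff zero-dimensional''.

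First I would treat the locally compact case. A locally compact space is Hausdorff by our conventions. Being scattered and Hausdorff (hence $\Ti[1]$), $X$ is totally disconnected by the fact recalled in Section~\ref{sec:backgrounds}. Van Dantzig's theorem (also recalled there) then says that a locally compact totally disconnected space is zero-dimensional. Hence $X$ is Hausdorff and zero-dimensional, and Theorem~\ref{th:zerodim} applies directly. This is the case already advertised in the introduction, so no surprises are expected.

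Next the metrisable case. A metrisable space is Hausdorff, in particular $\Ti[1]$, so again $X$ scattered and $\Ti[1]$ is totally disconnected. For metrisable spaces, total disconnectedness (hereditary disconnectedness) implies zero-dimensionality---one can invoke the classical fact that a metrisable space has covering/small inductive dimension zero as soon as it is totally disconnected, or more directly: a countable scattered metrisable space embeds into an ordinal space by \cite[IV]{KU_1953}, and ordinal spaces are zero-dimensional, but to avoid the countability restriction I would rather cite the standard dimension-theoretic statement (e.g.\ that metrisable hereditarily disconnected spaces are zero-dimensional, cf.\ Engelking) or simply note that in a metric scattered space each point $x$ of Cantor--Bendixson rank $\alpha$ has arbitrarily small neighbourhoods whose boundary, lying in $X^{(<\alpha)}$, can be shrunk away by a transfinite induction on the rank to produce a clopen basis. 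Either way, $X$ is Hausdorff and zero-dimensional, and Theorem~\ref{th:zerodim} applies.

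The main obstacle is purely a matter of bookkeeping on the topological side: being precise about why a metrisable scattered space is zero-dimensional without over-citing or re-proving a dimension-theory result. In the locally compact case van Dantzig does all the work and there is essentially nothing to prove. I do not expect any difficulty with the dynamical conclusions themselves, since they are immediate applications of the already-established Theorems~\ref{th:amenRprec} and~\ref{th:umflo} once full transitivity is secured; in particular the partition into similarity classes and the product formula $\prod_i \LO(X_i)$ for $\umf{\Homeo(X)}$ require no further argument here.
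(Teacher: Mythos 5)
Your overall reduction is exactly the paper's: everything follows from Theorems~\ref{th:amenRprec} and~\ref{th:umflo} once $X$ is known to be Hausdorff and zero-dimensional (so that Theorem~\ref{th:zerodim} gives full transitivity), and your treatment of the locally compact case (scattered $\Ti[1]$ $\Rightarrow$ totally disconnected, then van Dantzig) is identical to the paper's.

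There is, however, a genuine gap in your metrisable case. The ``standard dimension-theoretic statement'' you propose to cite --- that a totally disconnected (or hereditarily disconnected) metrisable space is zero-dimensional --- is \emph{false}: Erd\H{o}s space, the set of points of $\ell^2$ with all coordinates rational, is a separable, completely metrisable, totally disconnected space of dimension one (this counterexample appears in Engelking's own treatment of dimension theory, so the reference you gesture at actually refutes the claim). Total disconnectedness implies zero-dimensionality only under additional hypotheses such as local compactness, which is precisely why the first bullet needs van Dantzig and why the metrisable case cannot be disposed of by the same slogan. Your two fallbacks do not close the gap either: the embedding into ordinal spaces via \cite[IV]{KU_1953} is restricted to countable spaces, and the transfinite-induction sketch (shrinking boundaries into lower Cantor--Bendixson strata) is the germ of a correct argument specific to scattered spaces but is not carried out --- one must control how lower-rank points accumulate, and this is where the real work lies. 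The paper sidesteps all of this with a different and cleaner reduction: by \cite[Cor.~5]{Telgarsky_1968} every metrisable scattered space admits a \emph{scattered} compactification, which is zero-dimensional by the locally compact case already established, and zero-dimensionality is hereditary. You should either adopt that route or replace your false citation with a genuine proof that scattered metrisable spaces are zero-dimensional.
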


\begin{proof}
    By Theorems~\ref{th:zerodim}, \ref{th:amenRprec} and \ref{th:umflo}, we only need to show that $X$ is zero-dimensional.
	\begin{itemize}
		\item A locally compact scattered space is zero-dimensional by van Dantzig's theorem, since a scattered $\Ti[1]$ space is totally disconnected.
		\item Since zero-dimensionality is inherited by subspaces, any scattered space that admits a \emph{scattered} compactification has to be zero-dimensional by the previous argument. By \cite[Cor.~5]{Telgarsky_1968}, metrisable scattered spaces admit such compactifications.
	\end{itemize}
\end{proof}

We note that locally compact scattered spaces include the following two much-studied classes:
\begin{enumerate}
	\item Ordinal spaces and, more generally, any locally closed subspace of a finite product of ordinals. We retrieve in particular Theorems~13 and 16 about $\Homeo(\omega_1)$ of \cite{Gheysens_omega}. The computation of the universal minimal flow of $\Homeo(\omega_1)$ was also obtained independently by Basso and Zucker in \cite[\S~9]{BZ_2020}.
	\item Compact scattered spaces. By Stone's duality, these correspond to \emph{superatomic} Boolean algebras (Boolean algebras whose any quotient is atomic). This class of algebras has been extensively investigated, in particular in connection with the axioms of set theory---see for instance the survey \cite{Roitman_1989}.
\end{enumerate}

\subsection{Exotic constructions}\label{sec:exotic}

We prove here that the previous results cannot be extended to a larger class of scattered spaces in the absence of full transitivity. More precisely, Corollary~\ref{cor:loccompscatt} can fail for the homeomorphism group of a scattered space that is Hausdorff but not locally compact or that is locally quasi-compact but not Hausdorff. To that end, we will give two constructions that show the variety of groups that can occur as homeomorphism groups of scattered non-locally compact spaces:

At the core of our constructions lies the notion of a \emph{graph}, which for this paper is assumed to be non-oriented, simple and combinatorial, i.e.~a graph $\Graph$ is a pair $(V, E)$ of a vertex set $V$ and an edge set $E$ that is a subset of the set of all pairs of (distinct) points of $V$. We recall that the automorphism group $\Aut(\Graph)$ is the subgroup of $\Sym(V)$ that leaves $E$ invariant\footnote{To avoid trivialities, we will implicitly assume that the edge set is not empty. This is actually not a restriction, since the automorphism group of the graph without edge over the vertex set $V$ is the same as the automorphism group of the graph on $V$ where all pairs of vertices are in the edge set.}. Its topology is thus the topology of pointwise convergence on the set $V$ (viewed as a discrete space). Whenever the graph $\Graph$ is locally finite (each vertex has only a finite number of neighbourhoods), the group $\Aut(\Graph)$ is locally compact. Moreover, if $\Graph$ is a Cayley graph of a finitely generated group $G$, then the action of $G$ on $\Graph$ by translations realizes $G$ as a discrete subgroup of $\Aut(\Graph)$. In particular, the class of automorphism groups of graphs contains many non-amenable, non-Roelcke-precompact groups whose universal minimal flow is not metrisable, for instance those of Cayley graphs of non-amenable finitely generated groups (e.g.~$\Aut(\Tree_{2d})$ for any regular tree $\Tree_{2d}$ of degree $2d$, $d \geq 2$).

Both our constructions will show that the automorphism group of \emph{any} graph can occur as the homeomorphism group of some scattered spaces, when we leave the realm of locally compact or metrisable scattered spaces. By the previous paragraph, this will be sufficient to show the importance of the hypothesis in Corollary~\ref{cor:loccompscatt}.

We first provide examples that fail to be Hausdorff.

\begin{proposition}\label{prop:exoticgraph}
	Any automorphism group of a graph can be realized as the homeomorphism group of a scattered, locally quasi-compact space (which is not $\Ti[1]$ and not zero-dimensional).
\end{proposition}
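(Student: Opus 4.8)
The plan is to build, for a given graph $\Graph = (V, E)$, a scattered space $X$ whose underlying set consists of $V$ together with one extra "limit" point for each edge (and, say, a few auxiliary points to rigidify the construction), equipped with a non-$\Ti[1]$ topology that encodes the edge relation, so that a permutation of $V$ extends to a homeomorphism of $X$ if and only if it preserves $E$. The guiding idea is the classical trick for non-Hausdorff spaces: attach to each edge $\{u, v\} \in E$ a point $p_{uv}$ whose \emph{only} neighbourhood (apart from $X$ itself, or rather whose minimal open neighbourhood) is $\{p_{uv}, u, v\}$; dually, declare each $v \in V$ to be isolated (so $\{v\}$ is open). Then the closure of $\{p_{uv}\}$ is $\{p_{uv}, u, v\}$, the edge point "remembers" exactly its two endpoints, and a bijection of the whole space is a homeomorphism iff it permutes the isolated points $V$ among themselves, permutes the edge points $\{p_{uv}\}$ among themselves, and is compatible with the incidence relation --- which is precisely the condition of being a graph automorphism (together with the induced action on edges, which a graph automorphism determines).

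First I would write down the topology precisely: let $X = V \sqcup P$ with $P = \{p_e : e \in E\}$, declare a set $O \subseteq X$ open iff for every $p_e \in O$ with $e = \{u,v\}$ one has $u, v \in O$ as well. One checks this is a topology (arbitrary unions and finite intersections of such sets again have the property), that each $v \in V$ is isolated, that the minimal open neighbourhood of $p_e$ is $\{p_e, u, v\}$, and that $X$ is quasi-compact if $V$ is finite and locally quasi-compact in general (each point has the finite, hence quasi-compact, minimal neighbourhood just described) but patently not $\Ti[1]$ (the point $p_e$ is not closed: its closure contains $u$ and $v$) and not zero-dimensional (the only clopen sets turn out to be unions of connected "stars", so the clopen sets do not separate $p_e$ from $u$). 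Next I would verify $X$ is scattered: any nonempty $A \subseteq X$ either meets $V$, in which case any such point is isolated in $A$, or else $A \subseteq P$, in which case \emph{every} point of $A$ is isolated in $A$ since the minimal neighbourhood of $p_e$ meets $P$ only in $p_e$; so $A$ always has an isolated point. Then comes the identification of $\Homeo(X)$: a homeomorphism must send isolated points to isolated points, so it preserves $V$ setwise and hence $P$ setwise; restricted to $V$ it is a bijection $\sigma$, and continuity forces that the minimal neighbourhood $\{p_e, u, v\}$ of $p_e$ maps onto the minimal neighbourhood of its image $p_{e'}$, which means $\{\sigma u, \sigma v\} = e'$, i.e. $\sigma$ carries edges to edges; conversely any $\sigma \in \Aut(\Graph)$ induces a homeomorphism by setting $p_e \mapsto p_{\sigma e}$. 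This gives an isomorphism of topological groups $\Homeo(X) \cong \Aut(\Graph)$ once one checks it is also a homeomorphism --- which is immediate since the pointwise-convergence topology on $\Homeo(X)$ restricted to the action on $V$ (a dense-in-the-relevant-sense, in fact cofinite-information, subset) matches the pointwise-convergence topology on $\Aut(\Graph) \le \Sym(V)$; more carefully, knowing the image of finitely many points of $X$ is equivalent to knowing the image of finitely many vertices of $V$, since the image of $p_e$ is determined by the images of the endpoints of $e$.

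The main obstacle I anticipate is not any single deep step but rather getting the separation/non-zero-dimensionality side-claims and the \emph{topological} (not merely abstract) nature of the isomorphism exactly right, together with the pesky footnote convention of the paper that edge sets are nonempty (so the graph-without-edges degenerate case is handled by passing to the complete graph, which one should remark makes no difference to the automorphism group or to the construction). A secondary point needing care is ensuring the homeomorphism group contains \emph{no} extra elements: in principle a homeomorphism could fix $V$ pointwise yet permute the edge points over a common pair of endpoints, but since distinct edges of a simple graph have distinct endpoint pairs, the map $p_e \mapsto p_{\sigma e}$ is forced once $\sigma$ is known, so rigidity holds. I would also double-check local quasi-compactness at points of $V$ (trivial, they are isolated) and at points of $P$ (their minimal neighbourhood is finite). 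With these verifications in place the proposition follows.
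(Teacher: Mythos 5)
Your construction is the same as the paper's (the paper takes $X = V \cup E$ with exactly the topology you describe, identifies $V$ as the isolated points and $E$ as $X^{(1)}$, and shows the restriction map to $\Sym(V)$ is a topological isomorphism onto $\Aut(\Graph)$ because the image of an edge-point is determined by the images of its endpoints), and your verification of scatteredness, local quasi-compactness, rigidity, and the topology match is correct. One slip worth fixing: you have the closure relation backwards. In this topology each edge-point $p_e$ \emph{is} closed (its complement is open, since removing $p_e$ removes no endpoint of any remaining edge-point); it is the \emph{vertex} singletons that fail to be closed, $\overline{\{v\}}$ being $v$ together with all $p_e$ with $v \in e$, and this is what witnesses the failure of $\Ti[1]$ (and, via \enquote{scattered $\Ti[0]$ zero-dimensional would imply Hausdorff}, the failure of zero-dimensionality). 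The conclusion is unaffected, but the stated justification should be corrected.
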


\begin{proof}
    Let $\Graph$ be a graph with vertex set $V$ and edge set $E$. 
The idea of the construction is to realize $V$ as the dense set of isolated points of a scattered space $X$ in such a way that the adjacency relation given by $E$ is topologically encoded in $X$. To do that, we define $X$ as the set $V \cup E$ endowed with the following topology: a subset $U$ is open if, whenever $e \in U$ for some $e \in E$, then $e \subset U$. That is, whenever $U$ contains a point corresponding to an edge, it contains also the points associated to the vertices of that edge.

Observe that, with this topology, the set of isolated points of $X$ is exactly $V$. Moreover, $X^{(1)} = E$ and $X^{(2)} = \emptyset$, hence $X$ is indeed scattered. Since any point admits a finite neighbourhood ($\{v\}$ if $v \in V$ or $\{e\} \cup e$ if $e \in E$), the space $X$ is locally quasi-compact.

As for any scattered space, the restriction map
\begin{equation*}
    \rho\colon \Homeo(X) \rightarrow \Sym(X \setminus X^{(1)}) = \Sym(V)
\end{equation*}
	is a continuous and injective morphism, since isolated points form a discrete dense subset. Let us show that $\rho$ is, here, an isomorphism onto its image, which is $\Aut(\Graph)$.

	Whereas points in $X^{(1)}$ are closed, the closure of the singleton $\{v\}$ for $v \in V$ is made of the vertex $v$ and of all the edges $e$ that contain $v$. (In particular, $X$ is not $\Ti[1]$.) That is, the vertices $v$ and $w$ are adjacent in the graph $\Graph$ if \emph{and only if} the intersection $\overline{\{v\}} \cap \overline{\{w\}}$ (in the topological space $X$) is not empty. In particular, for any homeomorphism $h$ of $X$, the restriction $\rho(h)$ preserves the adjacency relation $E$ on $V$. On the other hand, any element of $\Aut(\Graph)$, when viewed as a permutation on $V \cup E$, preserves the open sets. Therefore, $\rho$ is a bijection onto the subgroup $\Aut(\Graph)$ of $\Sym(V)$.

	Finally, observe that a homeomorphism of $X$ fixes an edge $e$ if it fixes both its vertices. Therefore, a basis of identity neighbourhoods of $\Homeo(X)$ is here given by the fixators of finite sets of isolated points, hence the map $\rho$ is a topological isomorphism onto its image, i.e.~$\Homeo(X) \simeq \Aut(\Graph)$.
\end{proof}

We now provide examples that are Hausdorff but fail to be zero-dimensional (and a fortiori fail to be locally compact). For this second construction, we recall that we can realize a graph as a topological space by identifying each edge with an interval $[0,1]$ and glueing the extremities of the intervals along the vertices. Notice that a homeomorphism of the resulting topological space has to preserve vertices of degree other than $2$. In particular, if the graph $\Graph$ has no such vertices, there exists a surjection from the homeomorphism group $\Homeo(\Graph)$ onto the automorphism group $\Aut(\Graph)$. 

\begin{proposition}\label{prop:exoticspace}
	Let $Z$ be any Hausdorff completely regular (non-discrete) topological space. Then there exists a Hausdorff (non-locally compact, non-zero-dimensional) scattered space $X$ such that $\Homeo(Z)$ is abstractly isomorphic to the quotient $\Homeo(X) / \Fixa(X^{(1)})$.

 Moreover, if $Z$ is the topological realization of a graph $\Graph$ without vertices of degree~$2$, then the composition of this isomorphism with the map $\Homeo(Z) \rightarrow \Aut(\Graph)$ (given by the restriction of a homeomorphism to its action on the vertices) gives a continuous surjection from $\Homeo(X)$ onto $\Aut(\Graph)$.
\end{proposition}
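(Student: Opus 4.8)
The plan is to inflate $Z$ into a scattered space of Cantor--Bendixson rank $2$ by grafting a countable fan of new isolated points above each of its points. Concretely, I would take $X = Z \sqcup (Z \times \Nat)$, declare every point of $Z \times \Nat$ to be isolated in $X$, and declare the sets
\[
	N(z, W, k) \ =\ \{z\} \cup \bigl\{\, (z', n) : z' \in W,\ n \geq k \,\bigr\}
\]
(for $z \in W$, $W$ open in $Z$, and $k \in \Nat$) to form a neighbourhood basis at the point $z \in Z$; this is a filter basis because $N(z, W_1, k_1) \cap N(z, W_2, k_2) = N(z, W_1 \cap W_2, \max(k_1, k_2))$. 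First I would record the elementary facts: the isolated points of $X$ are exactly $Z \times \Nat$, each point of $Z$ is a limit of the fan above it, and $Z$ carries the discrete subspace topology in $X$; hence $X^{(1)} = Z$, $X^{(2)} = \emptyset$, and $X$ is scattered of rank $2$. Hausdorffness of $X$ reduces to the $T_1$/$T_2$ separation of $Z$: two isolated points separate trivially; $(z', n)$ and $z$ separate by raising $k$ above $n$ when $z' = z$, and by a $Z$-neighbourhood of $z$ omitting $z'$ otherwise; two distinct points of $Z$ separate via the fans over disjoint $Z$-neighbourhoods. Finally, at any $z$ that is not isolated in $Z$ (such $z$ exist since $Z$ is non-discrete) one computes $\overline{N(z, W, k)} \cap X^{(1)} = \overline{W}$ (closure in $Z$), which is an infinite closed discrete subspace of $X$; this shows simultaneously that $z$ has no compact neighbourhood (a compact one would be closed, hence would contain this set) and no clopen neighbourhood contained in $N(z, W, k)$ (which meets $X^{(1)}$ only in $\{z\}$), so $X$ is neither locally compact nor zero-dimensional.

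Next I would prove $\Homeo(X)/\Fixa(X^{(1)}) \cong \Homeo(Z)$. Since the Cantor--Bendixson derivative is a topological invariant, every $h \in \Homeo(X)$ preserves $X^{(1)} = Z$ as a set, so restriction gives a group homomorphism $\rho \colon \Homeo(X) \rightarrow \Sym(Z)$ with $\ker \rho = \Fixa(X^{(1)})$ by definition; the content is that $\operatorname{im} \rho = \Homeo(Z)$. The inclusion $\Homeo(Z) \subseteq \operatorname{im}\rho$ is easy: any $g \in \Homeo(Z)$ lifts fibrewise to the self-map of $X$ sending $z \mapsto g(z)$ on $Z$ and $(z, n) \mapsto (g(z), n)$ on $Z \times \Nat$, which is a homeomorphism because $h^{-1}(N(z, W, k)) = N(g^{-1}(z), g^{-1}(W), k)$, and clearly $\rho(h) = g$. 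For the reverse inclusion, take $h \in \Homeo(X)$, put $g = \rho(h)$, and use the crucial identity $\overline{N(z, W, k)} \cap X^{(1)} = \overline{W}$ (closure in $Z$). Given an open $W' \ni g(z)$ in $Z$: as $h$ sends $z$ to $g(z) \in N(g(z), W', 0)$, the $h$-preimage of that open set is a neighbourhood of $z$, hence contains some $N(z, W, k)$, so $h(N(z, W, k)) \subseteq N(g(z), W', 0)$; applying $h$ (which commutes with closure and preserves $X^{(1)}$) and intersecting with $X^{(1)}$ yields $g(\overline{W}) \subseteq \overline{W'}$, so $g^{-1}(\overline{W'})$ is a $Z$-neighbourhood of $z$. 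This is where the \emph{complete regularity} of $Z$ enters, and mere Hausdorffness would not do: shrinking an arbitrary open $V \ni g(z)$ to an open $W'$ with $\overline{W'} \subseteq V$ shows $g^{-1}(V)$ is a neighbourhood of $z$, i.e.\ $g$ is continuous; running the same argument with $h^{-1}$ shows $g^{-1} = \rho(h^{-1})$ is continuous, so $g \in \Homeo(Z)$. The first isomorphism theorem then gives the abstract isomorphism.

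For the \enquote{moreover}, I would compose this isomorphism with the surjection $\Homeo(Z) \rightarrow \Aut(\Graph)$ recalled just before the statement (well defined since $\Graph$ has no vertex of degree $2$, so every homeomorphism of the realisation $Z$ permutes the vertices, and surjective since graph automorphisms extend to homeomorphisms of $Z$; here $Z$ is Hausdorff, completely regular and non-discrete, so the first part indeed applies to it). The composite $\Homeo(X) \rightarrow \Aut(\Graph)$ is a surjection of abstract groups, and it is continuous because the preimage of a basic identity neighbourhood of $\Aut(\Graph)$ --- the fixator of a finite vertex set $V_0$ --- is the fixator in $\Homeo(X)$ of $V_0$ regarded inside $Z = X^{(1)} \subseteq X$, an identity neighbourhood of $\Homeo(X)$.

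The main obstacle is the inclusion $\operatorname{im}\rho \subseteq \Homeo(Z)$: one must rule out a homeomorphism of $X$ whose restriction to $X^{(1)}$ is a merely bijective, discontinuous self-map of $Z$. This amounts to recovering the topology of $Z$ from that of $X$ invariantly, and the identity relating the $X$-closure of a fan to the $Z$-closure of its base does exactly that --- but turning the information it produces (that $g^{-1}$ of a closed neighbourhood is a neighbourhood) into genuine continuity of $g$ is precisely the step that requires the completely-regular hypothesis. The remaining verifications --- scatteredness, the separation properties of $X$, the fibrewise lift, and the continuity in the \enquote{moreover} --- should be routine.
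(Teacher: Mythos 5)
Your construction is, up to renaming the point $z$ as $(z,\omega)$, exactly the space $Z\times[0,\omega]$ used in the paper, and your argument follows the same route: the identity $\overline{N(z,W,k)}\cap X^{(1)}=\overline{W}$ is precisely the paper's observation that the $X$-closure of $V\times[n,\omega[$ meets $Z_\omega$ in $\overline{V}\times\{\omega\}$, it yields non-local-compactness and non-zero-dimensionality in the same way, and complete regularity is invoked at the same spot to upgrade \enquote{preserves closed neighbourhoods} to \enquote{homeomorphism}. The proposal is correct and essentially identical to the paper's proof, merely spelling out a few steps the paper leaves terse.
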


\begin{proof}
    Let $X$ be the set $Z \times [0, \omega]$ and let us write $Z_\omega$ for the slice $Z \times \{\omega\}$. (Here, $[0, \omega]$ is the countable set $\{0, 1, 2, \dots\} \cup \{\omega\}$, i.e.~the ordinal $\omega + 1$.) We endow $X$ with the following topology: a set $U$ is open if, whenever it contains a point $(z, \omega)$, then it contains $V \times [n, \omega[$ for some $n < \omega$ and some $V \subseteq Z$ which is a neighbourhood of $z$ (for the given topology on $Z$).

	Observe that the set of isolated points of $X$ is $Z \times [0, \omega[$ and that any point $(z, \omega)$ admits a neighbourhood $U$ such that $U \cap Z_{\omega} = \{(z, \omega)\}$ (namely, $U = \{(z, \omega)\} \cup V \times [n, \omega[$ for any $n < \omega$ and any $V$ which was a neighbourhood of $z$ in $Z$). Hence $X^{(1)} = Z_\omega$ and $X^{(2)} = \emptyset$, which shows that $X$ is scattered. It is obvious from the definition of the open sets that $X$ is Hausdorff (since we assumed $Z$ to be so).

The key feature of this construction is that, even if $Z_{\omega}$ is a discrete subspace of $X$, its embedding in $X$ still carries enough topological information to recover the given topology on $Z$. More precisely, let $V$ be an open subset of $Z$ (for its given topology) and consider, in $X$, the set $W = V \times [n, \omega[$. Then $W \cup (z, \omega)$ is a neighbourhood of $(z, \omega)$ in $X$ for \emph{any} $z \in V$. Better, the intersection of $Z_\omega$ with the closure of $W$ (for the topology on $X$) is exactly the set $\overline{V} \times \{\omega\}$, where $\overline{V}$ is the closure of $V \subseteq Z$ for the given topology on $Z$. This has two consequences:
\begin{enumerate}
    \item If $z \in Z$ is not isolated, then $(z, \omega) \in X$ admits no basis of \emph{closed} neighbourhoods. Indeed, a closed neighbourhood $U$ of $(z, w)$ must meet $Z_\omega$ in a set in bijection with a closed neighbourhood of $z \in Z$, hence in particular $U \cap Z_\omega$ has at least two points since $z$ is not isolated. But the neighbourhood $W \cup (z, w)$ of above meets $Z_\omega$ in only one point. A fortiori, since $Z$ is not discrete, $X$ cannot be locally compact nor zero-dimensional (nor even completely regular).
    \item Under the obvious identification between the sets $X^{(1)} = Z_\omega$ and $Z$, the restriction map $\Homeo(X) \rightarrow \Sym(Z)\colon h \mapsto h_{|X^{(1)}}$ actually defines a map 
\begin{equation*}
    \rho\colon \Homeo(X) \rightarrow \Homeo(Z).
\end{equation*}
Indeed, since $Z$ is completely regular, any permutation that preserves the closed neighbourhoods must be a homeomorphism. Moreover, this map is surjective (for any $h \in \Homeo(Z)$, the map $\widehat{h} = h \times \id_{[0, \omega]}$ is a homeormorphism of $X$ such that $\rho(\widehat{h}) = h$).
\end{enumerate}

The kernel of $\rho$ is made of all homeomorphisms that are trivial on $X^{(1)}$. This gives us the abstract isomorphism of the proposition. The continuity statement in case where $Z$ is a simplicial graph is obvious from the fact that the topology on the automorphism group of $\Graph$ is given by the pointwise convergence on the discrete set of vertices.
\end{proof}

\begin{remark}
	A contrario, the above construction can also yield full transitivity outside the realm of zero-dimensional spaces. For instance, if $Z$ is a perfect space on which $\Homeo(Z)$ acts $n$-transitively for any $n$ (e.g.~$Z$ can be a Euclidean space of dimension at least two), then $\Homeo(X)$ is fully transitive whereas $X$ is not zero-dimensional. 
\end{remark}

\section{Application to the classification of homeomorphism groups of ordinal spaces}\label{sec:classif}

As an application of the topological structure of fully transitive groups, we classify here the homeomorphism group of compact ordinal spaces.

We call \emph{ordinal space} any ordinal $\alpha$ endowed with its order topology. The latter is generated by open intervals; since $\alpha$ is well-ordered, this topology is Hausdorff and admits a basis made of the sets
\begin{equation*}
	[\beta, \gamma[ = \left\{x\ \middle|\ \beta \leq x < \gamma \right\}
\end{equation*}
where $\beta < \gamma \leq \alpha$. Closed intervals $[\beta, \gamma]$ are easily seen to be compact; since $[\beta, \gamma] = [\beta, \gamma + 1[$, we conclude that ordinal spaces are locally compact. Any nonempty subset $A$ of $\alpha$ contains a minimum, which is of course isolated in $A$. In particular, ordinal spaces are Hausdorff, zero-dimensional, and scattered; their homeomorphism group is fully transitive.

By definition, an ordinal is the set of all ordinals strictly smaller than itself. Hence a more suggestive writing for the topological structure of ordinals is $\alpha = [0, \alpha[$ and $\alpha + 1 = [0, \alpha]$. In particular, an ordinal is compact if and only if it is $0$ or a successor ordinal. Moreover, any compact subset of a limit ordinal $\alpha$ is contained in a compact set of the form $[0, \beta]$. Therefore, Alexandrov's one-point compactification of $\alpha$ can be identified with $\alpha + 1$.

The main topological properties of ordinals spaces can be found in Examples~40--43 of \cite{SS_1978}. For basic facts about ordinal arithmetic, we refer to \cite[I.2]{Jech_2003}, from which we also follow the notational convention: in particular, the addition is \enquote{continuous on the right} ($\beta \mapsto \alpha + \beta$ is continuous for any fixed $\alpha$; equivalently, it commutes with the supremum).


We point out three facts particularly relevant for the classification of homeomorphism groups. Firstly, any nonzero ordinal can be uniquely written as a sum of decreasing powers of $\omega$, called its \emph{Cantor normal form}. That is, for any $\alpha > 0$, there exist a unique natural number $n \geq 1$, a unique decreasing $n$-tuple of ordinals $\alpha \geq \beta_1 > \beta_2 > \dots > \beta_n$, and a unique $n$-tuple of nonzero natural numbers $k_1, \dots, k_n$ such that
\begin{equation*}
	\alpha = \omega^{\beta_1} \cdot k_1 + \dots + \omega^{\beta_n} \cdot k_n.
\end{equation*}

Secondly, powers of $\omega$ \enquote{absorb} smaller ordinals on the left, that is $\beta + \omega^\alpha = \omega^\alpha$ for any $\beta < \omega^\alpha$ (this is proven easily by induction on $\alpha$). Note that the decomposition into Cantor normal forms implies by the way that these powers are the only ordinals with this property.

Lastly, a nonzero point $x$ of an ordinal space is of Cantor--Bendixson rank $\alpha$ if and only if $x = x' + \omega^\alpha$ for some ordinal $x'$. The \enquote{if} is an easy transfinite induction argument and the \enquote{only if} follows from the Cantor normal form.

\subsection{Classification of ordinal spaces}

Let us start by classifying the ordinal spaces up to homeomorphism. The following result is probably well-known (the compact case is given for instance in \cite[Cor.~3]{Baker_1972}).

\begin{theorem}\label{th:classordspaces}
    Any ordinal space is homeomorphic to one of the following ordinal spaces:
\begin{enumerate}
    \item $\emptyset$ or $k$,
    \item $\omega^\alpha \cdot k + 1$,
    \item $\omega^\alpha \cdot k$,
    \item $\omega^\alpha \cdot k + \omega^\beta$,
\end{enumerate}
where $k$ is a nonzero natural number, $\alpha$ is a nonzero ordinal and, for the last case, $\beta$ is nonzero ordinal $< \alpha$.

Moreover, there is no redundancy in this classification: no space of one of these four families is homeomorphic to a space of another family and, inside a family, the parameters $k$, $\alpha$, and $\beta$ are topological invariants.
\end{theorem}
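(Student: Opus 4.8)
The plan is to prove the two assertions separately: \emph{realization} (every ordinal space is homeomorphic to one in the list) and \emph{non-redundancy}.

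\textbf{Realization.} Two elementary tools suffice. First, a \emph{splitting principle}: if $\mu<\nu$ then, since $\mu+1$ is a successor, the interval $[\mu+1,\nu]$ is clopen in $[0,\nu]$ (and $[\mu+1,\nu[$ is clopen in $[0,\nu[$); being an interval it carries its own order topology, so it is homeomorphic to $[0,\nu']$ (resp.\ $[0,\nu'[$), where $\nu'$ is the unique ordinal with $\mu+\nu'=\nu$. Second, an \emph{absorption lemma}: $[0,\omega^{\alpha}\cdot k]\sqcup[0,\mu]\cong[0,\omega^{\alpha}\cdot k]$ whenever $\mu<\omega^{\alpha}$. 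For $k=1$ this is immediate: since $\omega^{\alpha}$ is additively indecomposable ($\mu+\omega^{\alpha}=\omega^{\alpha}$), the interval $[\mu+1,\omega^{\alpha}]$ has order type $\omega^{\alpha}+1$, hence is homeomorphic to $[0,\omega^{\alpha}]$; so $[0,\omega^{\alpha}]=[0,\mu]\sqcup[\mu+1,\omega^{\alpha}]\cong[0,\mu]\sqcup[0,\omega^{\alpha}]$, and commutativity of disjoint union gives the claim. For general $k$ one first splits off the last block $[\omega^{\alpha}\cdot(k-1)+1,\omega^{\alpha}\cdot k]\cong[0,\omega^{\alpha}]$ and absorbs $[0,\mu]$ into it.

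\textbf{Realization, continued.} If an ordinal space $X$ is finite it is discrete, giving type~(1). Otherwise $X$ is either $[0,\delta]$ with $\delta$ infinite (the compact case) or $[0,\lambda[$ with $\lambda$ a limit ordinal; write a Cantor normal form $\lambda=\omega^{\beta_1}\!\cdot k_1+\dots+\omega^{\beta_n}\!\cdot k_n$ (similarly for $\delta$). Iterating the splitting principle peels $[0,\delta]$ into $\bigsqcup_{j=1}^{n}[0,\omega^{\beta_j}\!\cdot k_j]$ (the last block being finite discrete if $\beta_n=0$), after which the absorption lemma swallows every block but the first into $[0,\omega^{\beta_1}\!\cdot k_1]$ (legitimate since $\omega^{\beta_j}\!\cdot k_j<\omega^{\beta_1}$ for $j\ge 2$), leaving $[0,\omega^{\beta_1}\!\cdot k_1]$: type~(2) with $\alpha=\beta_1$, $k=k_1$. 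In the limit case the same peeling gives $\bigsqcup_{j=1}^{n-1}[0,\omega^{\beta_j}\!\cdot k_j]\sqcup[0,\omega^{\beta_n}\!\cdot k_n[$, with $\beta_n\ge 1$; by the splitting principle $[0,\omega^{\beta_n}\!\cdot k_n[\cong[0,\omega^{\beta_n}\!\cdot(k_n-1)]\sqcup[0,\omega^{\beta_n}[$, and absorbing that compact block together with all the earlier ones into $[0,\omega^{\beta_1}\!\cdot k_1]$ leaves $[0,\omega^{\beta_1}\!\cdot k_1]\sqcup[0,\omega^{\beta_n}[$, which by the splitting principle equals $[0,\omega^{\beta_1}\!\cdot k_1+\omega^{\beta_n}[$. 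If $n=1$ this is $[0,\omega^{\beta_1}\!\cdot k_1[$, type~(3); if $n\ge 2$ it is type~(4) with $\alpha=\beta_1$, $k=k_1$, $\beta=\beta_n$ (and then $0<\beta_n<\beta_1$).

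\textbf{Non-redundancy.} For a nonempty compact scattered space $Y$ the derived sequence $Y\supseteq Y'\supseteq\cdots$ reaches $\emptyset$ only at a \emph{successor} stage (by compactness the nested nonempty closed $Y^{(\xi)}$ cannot have empty intersection at a limit stage), so $Y$ has well-defined invariants $\rho(Y)$ — the unique $\xi$ with $Y^{(\xi)}$ finite and nonempty — and $m(Y)=|Y^{(\rho(Y))}|$; for $Y=[0,\omega^{\alpha}\!\cdot k]$ these equal $\alpha$ and $k$. Now type~(1) is exactly the finite (discrete) ordinal spaces, with $k=|X|$; among infinite ordinal spaces, type~(2) is exactly the compact ones, where $\alpha=\rho(X)$, $k=m(X)$. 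For the non-compact ones, pass to the one-point compactification $X^{+}$, which is again compact and scattered; since a homeomorphism $X\to X'$ extends to $X^{+}\to X'^{+}$ taking the point at infinity to the point at infinity, the data $\alpha:=\rho(X^{+})$, $k:=m(X^{+})$ and $r:=$ the Cantor--Bendixson rank of $X^{+}\setminus X$ in $X^{+}$ are homeomorphism invariants of $X$. A direct computation gives $X^{+}=[0,\omega^{\alpha}\!\cdot k]$ for type~(3) with the point at infinity $\omega^{\alpha}\!\cdot k$ of rank $\alpha$, so $r=\alpha$; while $X^{+}=[0,\omega^{\alpha}\!\cdot k+\omega^{\beta}]\cong[0,\omega^{\alpha}\!\cdot k]$ for type~(4) with the point at infinity of rank $\beta<\alpha$, so $r=\beta<\alpha$. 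Hence the alternative $r=\alpha$ versus $r<\alpha$ separates type~(3) from type~(4) (and separates both from types~(1) and~(2) by cardinality and compactness), and in every family the parameters are recovered from $\rho$, $m$ and $r$. The only points requiring care are the set-up of the absorption ingredient — that $[\mu+1,\omega^{\alpha}]$ really carries its order topology and that this has type $\omega^{\alpha}+1$ — and the compactness argument behind the good behaviour of $\rho(Y)$; once the additive-indecomposability trick is spotted no genuine obstacle remains.
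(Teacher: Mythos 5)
Your proof is correct and follows essentially the same route as the paper: split along the Cantor normal form into clopen blocks, absorb everything below the leading term using the additive indecomposability of $\omega^{\alpha}$, and separate the four families via the Cantor--Bendixson rank, the cardinality of the top derived set, and the rank of the point at infinity in the one-point compactification. (One cosmetic caveat: your splitting principle $[\mu+1,\nu]\cong[0,\nu']$ is off by one point when $\nu'$ is finite, but this is harmless since finite blocks are absorbed anyway.)
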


\begin{proof}
If $\gamma$ is a finite ordinal, then it is obviously homeomorphic to a space of the first family. Let us then assume that $\gamma$ is infinite.

    The key point is to observe that ordinal addition translates to topological disjoint sum:
\begin{lemma}
    Let $\alpha$ and $\beta$ be two nonzero ordinals. Then the space $\alpha + \beta$ is homeomorphic to the disjoint sum of $\alpha + 1$ and $\beta'$, where $\beta' = \beta$ if $\beta$ is infinite and $\beta' + 1 = \beta$ if $\beta$ is finite.
\end{lemma}
\begin{proof}
    Since the subspace $[0, \alpha]$ of $\alpha + \beta$ is both open and closed, $\alpha + \beta$ is homeomorphic to the disjoint sum of $[0, \alpha]$ ($= \alpha + 1$) and $[\alpha + 1, \alpha + \beta[$. But the latter is homeomorphic to $[1, \beta[ = \beta'$.
\end{proof}
	Now of course the topological disjoint sum is \enquote{commutative}, hence the above lemma implies that, whenever $\alpha$ and $\beta$ are infinite, the space $\alpha + \beta + 1$ and $\beta + \alpha + 1$ are homeomorphic. Combining this with the \enquote{absorption phenomenon} for powers of $\omega$ and Cantor's normal form decomposition, we see that any infinite successor ordinal is homeomorphic to $\omega^\alpha \cdot k + 1$, where $\omega^\alpha \cdot k$ is the highest term in its Cantor normal form. 

We have thus shown that any compact (that is, successor) ordinal is homeomorphic to a space of one of the first two families. For the non-compact case, observe that a double application of the above lemma also allows to find a homeomorphism between $\alpha + \beta + \gamma$ and the topological disjoint sum of $\alpha + 1$, $\beta + 1$ and $\gamma'$, hence the spaces $\alpha + \beta + \gamma$ and $\beta + \alpha + \gamma$ are homeomorphic. We can therefore again use the absorption phenomenon on the Cantor normal form of a limit ordinal: if it is not already equal to $\omega^\alpha \cdot k$, then it is homeomorphic to $\omega^\alpha \cdot k + \omega^\beta$, where $\alpha$ (resp.~$\beta$) is the highest (resp.~lowest) exponent appearing in its Cantor normal form.

Let us prove that there is no redundancy in this classification. The various parameters $\alpha$, $\beta$, and $k$ are topological invariants. In the first family, $k$ is the cardinal of the space. Spaces of the second family have Cantor--Bendixson rank $\alpha + 1$ and they have exactly $k$ points in their $\alpha$-th derived subset. The same is true for the \emph{one-point compactification} of spaces of the third family. And spaces of the fourth family have exactly $k$ points of rank $\alpha$ and the point added in their one-point compactification is of rank $\beta$. Hence no two spaces of the same family can be homeomorphic to each other.

Lastly, no homeomorphism can occur across these families. The spaces of the first family are finite and the ones of the second family are infinite and compact, whereas the last two families are made of non-compact spaces. A hypothetical homeomorphism $\phi$ between a space $\omega^\alpha \cdot k$ and a space $\omega^{\alpha'} \cdot k' + \omega^\beta$ would need to extend to one-point compactifications, which have respective Cantor--Bendixson rank $\alpha + 1$ and $\alpha' + 1$, hence $\alpha = \alpha'$. But we must also have $\alpha = \beta$, since these are the respective Cantor--Bendixson rank of the added point in the compactification. This would contradict $\alpha' > \beta$, thus such a homeomorphism cannot exist.
\end{proof}

\subsection{Classification of homeomorphism groups of ordinal spaces}

We now leverage the results about full transitivity to show that non-homeomorphic compact ordinal spaces have non-isomorphic homeomorphism groups. Recall that ordinal spaces are locally compact, hence their homeomorphism group is fully transitive (Corollary~\ref{cor:loccompscatt}).

Let us first observe that, for an ordinal space $X$, the classification of closed normal subgroups (Proposition~\ref{prop:classnormal} and Remark~\ref{rem:classnormal}) takes a particularly easy-to-state form. As we have already noticed, two points in an ordinal space are similar if and only if they have the same Cantor--Bendixson rank (Remark~\ref{rem:similarityordinal}). Hence all similarity classes are infinite, except possibly one (the set of points of rank $\CB(X) - 1$, if $\CB(X)$ is a successor). Moreover, if a homeomorphism fixes the similarity class $X^{(\alpha)} \setminus X^{(\alpha + 1)}$, it actually fixes the whole derived subspace $X^{(\alpha)}$: indeed, in a scattered space, the points of rank $\alpha$ are dense in the $\alpha$-th derived subspace and since $X$ is Hausdorff, the set of fixed points of a homeomorphism is a closed subspace. 

Therefore, the closed normal subgroups of the homeomorphism group of an ordinal space $X$ are exactly the fixators $\Fixa X^{(\alpha)}$, plus possibly the preimage of the one or two proper nontrivial normal subgroups of $\Sym(X^{(\beta)})$, if $\beta + 1 = \CB(X)$ and if the set $X^{(\beta)}$ contains at least three points (Remark~\ref{rem:classnormal}). Observe in particular that the closed normal subgroups form a nested sequence indexed by an ordinal. This enables us to classify the homeomorphism group in the compact case:


\begin{theorem}
   Let $G_{\alpha, k} = \Homeo([0, \omega^\alpha \cdot k])$, where $\alpha$ is an ordinal and $k$ is a nonzero natural number. Then $\alpha$ and $k$ are invariants of topological groups (that is, if $G_{\alpha, k}$ and $G_{\alpha', k'}$ are isomorphic as topological groups, then $\alpha = \alpha'$ and $k = k'$). In particular, if two compact ordinals have isomorphic homeomorphism groups, then they are homeomorphic.
\end{theorem}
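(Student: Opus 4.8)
The plan is to recover both parameters from the lattice of \emph{closed} normal subgroups of $G_{\alpha,k}$, together with the isomorphism types of the corresponding quotient groups --- data that are invariants of $G_{\alpha,k}$ as a topological group. Write $X=[0,\omega^\alpha\cdot k]$, so that $\CB(X)=\alpha+1$, the top derived subspace $X^{(\alpha)}$ consists of exactly the $k$ points $\omega^\alpha,\omega^\alpha\cdot 2,\dots,\omega^\alpha\cdot k$ (all of Cantor--Bendixson rank $\alpha$), and, for each $\beta<\alpha$, the subspace $X^{(\beta)}$ still contains infinitely many points of rank $\beta$. First one disposes of the degenerate case $\alpha=0$: then $G_{0,k}=\Sym(k+1)$ is finite, whereas $G_{\alpha',k'}$ is infinite for every $\alpha'\geq 1$, and $\Sym(k+1)\cong\Sym(k'+1)$ forces $k=k'$. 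So assume henceforth that $\alpha\geq 1$ (and likewise $\alpha'\geq 1$).

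Next one unwinds, for the ordinal space $X$, the classification of closed normal subgroups recalled just before the statement. Since every similarity class of $X$ other than $X^{(\alpha)}$ is infinite, that classification shows that the closed normal subgroups of $G_{\alpha,k}$ are precisely: the fixators $\Fixa(X^{(\beta)})$ for $0\leq\beta\leq\alpha+1$ --- forming a nested chain with $\Fixa(X^{(0)})=\{1\}$ and $\Fixa(X^{(\alpha+1)})=G_{\alpha,k}$, and with $\Fixa(X^{(\alpha)})=G_{\alpha,k}$ exactly when $k=1$ --- together with the one or two further subgroups lying strictly between $\Fixa(X^{(\alpha)})$ and $G_{\alpha,k}$ that arise as preimages of the proper nontrivial normal subgroups of $\Sym(k)$ under the restriction homomorphism, which occur exactly when $k\geq 3$. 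Two points are crucial here. First, by full transitivity the restriction map $G_{\alpha,k}\to\Sym(X^{(\alpha)})$ is onto, with kernel $\Fixa(X^{(\alpha)})$, so $G_{\alpha,k}/\Fixa(X^{(\alpha)})\cong\Sym(k)$. Second, for each $\beta<\alpha$ the quotient $G_{\alpha,k}/\Fixa(X^{(\beta)})$ is infinite, since (again by full transitivity, applied to the infinite similarity class of rank-$\beta$ points, using also that the witnessing homeomorphisms may be taken trivial on points of higher rank) its image in $\Sym$ of the set of rank-$\beta$ points already contains all finitely supported permutations.

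These two points yield a purely group-theoretic description of $\Fixa(X^{(\alpha)})$: it is the smallest closed normal subgroup $N_0$ of $G_{\alpha,k}$ with $G_{\alpha,k}/N_0$ finite. Indeed, the second point shows that no $\Fixa(X^{(\beta)})$ with $\beta<\alpha$ has finite quotient, so the closed normal subgroups with finite quotient are exactly $\Fixa(X^{(\alpha)})$, the whole group, and the (at most two) extra ones --- all of which contain $\Fixa(X^{(\alpha)})$, which itself has finite quotient $\Sym(k)$ by the first point. Hence $k$ is determined by $G_{\alpha,k}$, since $G_{\alpha,k}/N_0\cong\Sym(k)$ has exactly $k!$ elements. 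To recover $\alpha$, one then considers the chain of closed normal subgroups of $G_{\alpha,k}$ that are \emph{contained in} $N_0$: by the description above this chain is exactly $\{\Fixa(X^{(\beta)}) : 0\leq\beta\leq\alpha\}$ (the extra $\Alt$- and $V_4$-type subgroups lie strictly above $N_0$), and $\beta\mapsto\Fixa(X^{(\beta)})$ is strictly increasing on $\{0,1,\dots,\alpha\}$, so this chain is well-ordered of order type $\alpha+1$. As this order type is again an invariant of the topological group, $\alpha$ is recovered. This proves the first assertion, and the ``in particular'' clause then follows at once from Theorem~\ref{th:classordspaces}: every compact ordinal space with at least two points is homeomorphic to $[0,\omega^\alpha\cdot k]$ for a unique pair $(\alpha,k)$.

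I expect the main difficulty to be organisational rather than conceptual: one must check carefully that the extra $\Alt$- and $V_4$-type normal subgroups (present when $k=3$, $k=4$, or $k\geq 5$) always lie strictly \emph{above} $N_0$, so that they perturb neither the identification of $N_0$ nor the order type $\alpha+1$; and one must establish the ``infinite quotient'' claim, which rests on the elementary ordinal-arithmetic fact that $[0,\omega^\alpha\cdot k]$ contains infinitely many points of each Cantor--Bendixson rank $\beta<\alpha$. Beyond the classification of closed normal subgroups already in hand, no new ingredient seems to be needed.
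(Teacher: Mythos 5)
Your proposal is correct and follows essentially the same route as the paper: both recover $k$ from the maximal finite discrete quotient (which is $\Sym(k)$, via the smallest finite-coindex closed normal subgroup $\Fixa(X^{(\alpha)})$) and recover $\alpha$ from the order type of the well-ordered chain of closed normal subgroups below it, relying on the classification of closed normal subgroups of fully transitive groups. Your write-up is somewhat more detailed than the paper's (explicitly treating $\alpha=0$, verifying the infinitude of the intermediate quotients, and checking that the $\Alt$/Klein-type subgroups sit above $N_0$), but the invariants used are the same.
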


\begin{proof}
    If $X = [0, \omega^\alpha \cdot k]$, then $X^{(\alpha)} = \{\omega^\alpha \cdot 1, \dots, \omega^\alpha \cdot k\}$ and the Cantor--Bendixson rank of $X$ is $\alpha + 1$. Thanks to the classification of closed normal subgroups explained above, we can then retrieve $k$ and $\alpha$ as invariants of topological groups as follows:
\begin{itemize}
    \item $k!$ is the maximal cardinal of a finite discrete quotient of $G_{\alpha, k}$;
    \item For any topological group $G$, let us define $\chain(G)$ as the supremum of all ordinals $\beta$ such that there exists an injective increasing map from the ordered set $(\beta, <)$ to the set of \emph{infinite-index} closed normal subgroups of $G$ (ordered by inclusion). This is an invariant of topological isomorphism and, by the classification of closed normal subgroups, we see that $\chain(G_{\alpha, k}) = \alpha$.
\end{itemize}
	The last statement follows from the classification of ordinal spaces (Theorem~\ref{th:classordspaces}).
	\end{proof}

\begin{remark}
	For any ordinals $\beta < \alpha$, there are $\kappa$ points of Cantor--Bendixson rank $\beta$ in the ordinal space $[0, \omega^\alpha]$, where $\kappa$ is the cardinal of $\omega^\alpha$ (which is equal to the cardinal of $\alpha$ if $\alpha$ is infinite). In particular, for a fixed infinite cardinal $\kappa$, the groups $\Homeo([0, \omega^\alpha])$, where the ordinal $\alpha$ ranges from $\kappa$ to its cardinal successor $\kappa_+$, give us $\kappa_+$ non-isomorphic groups with isomorphic upper completions, namely, $\Sym(\kappa)^\kappa$ (and hence with homeomorphic universal minimal flows, namely, $\LO(\kappa)^\kappa$).
\end{remark}

\subsubsection*{Noncompact case}

The non-compact case is slightly more involved and we were unable to reach a full classification.

Let $H_{\alpha, k} = \Homeo(\omega^\alpha \cdot k)$ and $I_{\alpha, k, \beta} = \Homeo(\omega^\alpha \cdot k + \omega^\beta)$ be the homeomorphism groups of the non-compact ordinals. The same argument as above allows us to retrieve the parameters $\alpha$ and $k$:
\begin{itemize}
    \item $(k-1)!$ is the maximal cardinal of a finite discrete quotient of $H_{\alpha, k}$;
    \item $k!$ is the maximal cardinal of a finite discrete quotient of $I_{\alpha, k, \beta}$;
    \item $\chain(F_{\alpha, k}) = \alpha$;
    \item $\chain(I_{\alpha, k, \beta}) = \alpha$.
\end{itemize}

Consequently, there is no isomorphism between the groups of the family $H_{\alpha, k}$ if $k \geq 2$. But $0! = 1!$, hence a first impediment towards a classification:

\begin{question}
	Are there isomorphisms between $H_{\alpha, 1}$ and $H_{\alpha, 2}$?
\end{question}

More generally, by considering the one-point compactification, we can find isomorphisms between $H_{\alpha, k}$ and the fixator of a point of rank~$\alpha$  in $G_{\alpha, k}$, which is a subgroup of index $k$. In particular, $H_{\alpha, 1}$ is isomorphic to $G_{\alpha, 1}$. Are there other isomorphisms? Because of the parameters $\alpha$ and $k$, the question is of the following form (and encompasses the previous question):

\begin{question}
    Are there isomorphisms between $G_{\alpha, k}$ and $H_{\alpha, k + 1}$?
\end{question}

Similarly, $I_{\alpha, k, \beta}$ is isomorphic to the fixator of a point of rank~$\beta$ in $G_{\alpha, k}$, which is of infinite index. We were unfortunately unable to retrieve the parameter $\beta$ or to exclude isomorphisms with groups of the other families:

\begin{question}
    Are there isomorphisms between $I_{\alpha, k, \beta}$ and $I_{\alpha, k, \beta'}$? Are there isomorphisms between $I_{\alpha, k, \beta}$ and $G_{\alpha, k}$ or $H_{\alpha, k + 1}$?
\end{question}

\bibliographystyle{../../BIB/amsalpha}
\bibliography{../../biblio/biblio}

\end{document}